\newtheorem{thrm}{Theorem}[section]
\newtheorem{lem}[thrm]{Lemma}
\newtheorem{prop}[thrm]{Proposition}
\newtheorem{cor}[thrm]{Corollary}
\theoremstyle{definition}
\newtheorem{definition}[thrm]{Definition}
\numberwithin{equation}{section}
\author{Na Huang and Jingjing Xue}
\address{Department of Applied Mathematics, Northwestern Polytechnical University, Xi'an, Shaanxi,
710129, P. R. China } \email{huangna7@126.com}
\thanks{This work was supported by the National Natural Science Foundation of China (Grant Nos. 11271299, 11001221) and Natural Science Foundation Research Project of Shaanxi Province (2012JM1014)}
\keywords{degenerate elliptic partial differential operator;
Dirichlet eigenvalue; inequality} \subjclass{Primary 35H05,
Secondary 35P15}
\begin{document}

\title[Running Head]{Inequalities of Dirichlet eigenvalues for degenerate elliptic partial differential operators}

\begin{abstract}
Let ${X_j},{Y_j}(j = 1, \cdot  \cdot  \cdot ,n)$ be vector fields
satisfying H\"{o}rmander's condition and ${\Delta _L} =
\sum\limits_{j = 1}^n {(X_j^2 + Y_j^2)}$. In this paper, we
establish some inequalities of Dirichlet eigenvalues for degenerate
elliptic partial differential operator ${\Delta _L}$ and $\Delta
_L^2$. These inequalities extend Yang's inequalities for Dirichlet
eigenvalues of Laplacian to the settings here and the forms of
inequalities are more general than Yang's inequalities. To obtain
them, we give a generalization of the inequality by Chebyshev.
\end{abstract}
\maketitle

\section{Introduction} \label{sect1}
Estimates of Dirichlet eigenvalues for Laplacian in the Euclidean
space have been extensively studied. For the following Dirichlet
problem\begin{displaymath}
 \left\{ {\begin{array}{*{20}c}
   { - \Delta  u  = \lambda u,} \hfill & {\mbox{in}~~ \Omega ,} \hfill  \\
   {u = 0,} \hfill & { \mbox{on}~~ \partial \Omega ,} \hfill  \\
\end{array}} \right.
\end{displaymath}where
$\Omega $ is a bounded domain in ${R^n}$, Payne, P\'{o}lya and
Weinberger in [11] obtained the inequality (now called the PPW
inequality)
\[{\lambda _{k + 1}} - {\lambda _k} \le \frac{4}{{nk}}\sum\limits_{r = 1}^k {{\lambda
_r}}.\]Hile and Protter in [4] proved the inequality (now called the
HP inequality)
\[\sum\limits_{r = 1}^k {\frac{{{\lambda _r}}}{{{\lambda _{k + 1}} - {\lambda _r}}}}  \ge \frac{{nk}}{4}.\]
Recently, Yang in [13] established some important eigenvalue
estimates including Yang's first inequality
\[\sum\limits_{r = 1}^k {{{\left( {{\lambda _{k + 1}} - {\lambda _r}} \right)}^2}}  \le \frac{4}{n}\sum\limits_{r = 1}^k {\left( {{\lambda _{k + 1}} - {\lambda _r}} \right)} {\lambda _r}\]
and Yang's second inequality
\[{\lambda _{k + 1}} \le \left( {1 + \frac{4}{n}} \right)\frac{1}{k}\sum\limits_{r = 1}^k {{\lambda _r}}.\]

\label{sect1}Some estimates for Dirichlet eigenvalues of
sub-Laplacian on the Heisenberg group was deduced. Niu and Zhang in
[10] obtained the PPW type inequality:
\[{\lambda _{k + 1}} - {\lambda _k} \le \frac{2}{{nk}}\left( {\sum\limits_{r = 1}^k {{\lambda _r}} } \right).\]Ilias and Makhoul in [5] gave the Yang type inequalities.

\label{sect1}In the paper, we consider the following two Dirichlet
problems:\begin{equation}
 \left\{ {\begin{array}{*{20}c}
   { - \Delta_L  u  = \lambda u,} \hfill & {\mbox{in}~~ \Omega ,} \hfill  \\
   {u = 0,} \hfill & { \mbox{on}~~ \partial \Omega ,} \hfill  \\
\end{array}} \right.
\end{equation}
and
\begin{equation}
 \left\{ {\begin{array}{*{20}c}
   {{(-\Delta_L)}^2 u= \lambda u,}  \hfill & {\mbox{in}~~ \Omega ,} \hfill  \\
   {u =\frac{\partial u}{\partial\nu}= 0,} \hfill & { \mbox{on}~~ \partial \Omega ,} \hfill  \\
\end{array}} \right.
\end{equation}
where $\Omega  \subset {R^{2n + 1}}$ is a bounded domain, the
boundary $\partial \Omega $ is smooth and not characteristic, $\nu $
is the outward unit normal on $\partial \Omega $; ${\Delta _L}$ is
the degenerate elliptic partial differential operator constituted by
vector fields ${X_j},{Y_j}(j = 1, \cdot  \cdot  \cdot ,n)$
satisfying H\"{o}rmander's condition,
\begin{align}{\Delta _L} =
\sum\limits_{j = 1}^n {(X_j^2 + Y_j^2)},
\end{align}
where ${X_j} = \frac{\partial }{{\partial {x_j}}} + 2\sigma
{y_j}{\left| z \right|^{2\sigma  - 2}}\frac{\partial }{{\partial
t}}$, ${Y_j} = \frac{\partial }{{\partial {y_j}}} - 2\sigma
{x_j}{\left| z \right|^{2\sigma  - 2}}\frac{\partial }{{\partial
t}}$, $j = 1, \cdot  \cdot  \cdot ,n,$ $x,y \in {R^n}$, $t \in R$,
$z = x + \sqrt { - 1} y \in C$, $\left| z \right| = {\left[
{\sum\limits_{j = 1}^n {(x_j^2 + y_j^2)} } \right]^{\frac{1}{2}}}$,
$\sigma $ is any natural number. When $\sigma  = 1$, ${\Delta _L}$
is the sub-Laplacian on the Heisenberg group; when $\sigma  = 2,3,
\cdot  \cdot  \cdot $, ${\Delta _L}$ is the operators discussed by
Greiner (see [3, 8]). We note that compared with sub-Laplacian on
the Heisenberg group, those operators by Greiner do not have
properties of group structure and translation. Some related papers
see [9, 14].

 \label{sect1}From [7], we know that
the eigenvalues of (1.1) and (1.2) exist and satisfy
\[0 < {\lambda _1} \le {\lambda _2} \le  \cdot  \cdot  \cdot  \le {\lambda _k} \le  \cdot  \cdot  \cdot  \to  + \infty.\]
The corresponding orthogonal normalized eigenfunctions ${u_1},{u_2},
\cdot  \cdot  \cdot ,{u_k}, \cdot  \cdot  \cdot $ satisfy
$\left\langle {{u_i},{u_l}} \right\rangle  = {\delta _{il}},$ $i,l =
1,2, \cdot  \cdot  \cdot.$ Since the boundary $\partial \Omega $ is
not characteristic, the eigenfunctions are smooth by using the
results in [12].

For convenience, we denote $L =  - {\Delta _L}$ in the sequel. The
main results of this paper are the following:

\begin{thrm} \label{l1}
Let $\left\{ {{\lambda _i}} \right\}$ be the eigenvalues of (1.1),
then
\begin{equation}\sum\limits_{i = 1}^k {({\lambda _{k + 1}} - {\lambda _i}} {)^\alpha } \le \sqrt {\frac{2}{n}} {\left( {\sum\limits_{i = 1}^k {({\lambda _{k + 1}} - {\lambda _i}} {)^\beta }\sum\limits_{i = 1}^k {{{\left( {{\lambda _{k + 1}} - {\lambda _i}} \right)}^{2\alpha  - \beta  - 1}}} {\lambda _i}}
\right)^{\frac{1}{2}}}.
\end{equation}\end{thrm}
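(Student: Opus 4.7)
The plan is to peel off the two-parameter freedom via Cauchy--Schwarz, reducing (1.4) to a one-parameter weighted Yang-type inequality, and then to establish that weighted inequality by combining the standard commutator argument for $\Delta_L$ with the generalization of Chebyshev's inequality mentioned in the abstract.

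By Cauchy--Schwarz applied to the splitting $(\lambda_{k+1}-\lambda_i)^{\alpha}=(\lambda_{k+1}-\lambda_i)^{\beta/2}\cdot(\lambda_{k+1}-\lambda_i)^{\alpha-\beta/2}$,
\[
\Bigl(\sum_{i=1}^{k}(\lambda_{k+1}-\lambda_i)^{\alpha}\Bigr)^{2}
\le
\sum_{i=1}^{k}(\lambda_{k+1}-\lambda_i)^{\beta}\cdot\sum_{i=1}^{k}(\lambda_{k+1}-\lambda_i)^{2\alpha-\beta},
\]
so (1.4) follows once the one-parameter weighted inequality
\[
\sum_{i=1}^{k}(\lambda_{k+1}-\lambda_i)^{p}\le\frac{2}{n}\sum_{i=1}^{k}(\lambda_{k+1}-\lambda_i)^{p-1}\lambda_i,\qquad p=2\alpha-\beta,
\]
is known. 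The base case $p=2$ is the $\Delta_L$-analogue of Yang's first inequality. To prove it I would run the Yang--Harrell commutator argument: take trial functions $\varphi_{i,\psi}=\psi u_i-\sum_{r=1}^{k}\langle\psi u_i,u_r\rangle u_r$ for each $\psi\in\{x_1,\ldots,x_n,y_1,\ldots,y_n\}$; the orthogonality of $\varphi_{i,\psi}$ to $u_1,\ldots,u_k$ and the Rayleigh--Ritz bound give $(\lambda_{k+1}-\lambda_i)\|\varphi_{i,\psi}\|^{2}\le\langle[L,\psi]u_i,\varphi_{i,\psi}\rangle$. A direct computation with the explicit form of $X_j,Y_j$ yields $[L,x_j]=-2X_j$, $[L,y_j]=-2Y_j$, and $[\psi,[L,\psi]]=2$; summing over $\psi$, applying Cauchy--Schwarz to the commutator term, and using $\sum_{j=1}^{n}(\|X_j u_i\|^{2}+\|Y_j u_i\|^{2})=\lambda_i$ (integration by parts, justified because $\partial\Omega$ is not characteristic) then produces the constant $2/n$.

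To upgrade from $p=2$ to arbitrary $p$, I would rewrite the target as $\sum_i c_i(\lambda_{k+1}-\lambda_i)^{2}\le\tfrac{2}{n}\sum_i c_i(\lambda_{k+1}-\lambda_i)\lambda_i$ with the monotone weight $c_i=(\lambda_{k+1}-\lambda_i)^{p-2}$ and invoke the generalization of Chebyshev's inequality established earlier in the paper. This is also where I expect the main obstacle to lie: the pointwise defect $(\lambda_{k+1}-\lambda_i)^{2}-\tfrac{2}{n}(\lambda_{k+1}-\lambda_i)\lambda_i$ changes sign with $i$, so the classical Chebyshev sum inequality is not enough to transfer the unweighted $p=2$ bound into its $c_i$-weighted counterpart; only the sharpened Chebyshev-type inequality allows such a weighted rearrangement while preserving the direction of the bound. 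The commutator identities in the baseline step, by contrast, are routine once one notices that the $\sigma$-dependent $\partial_t$ contributions in $X_j,Y_j$ cancel in $[\psi,[L,\psi]]$ because $\psi$ depends only on the horizontal variables $(x,y)$.
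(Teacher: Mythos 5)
Your architecture is genuinely different from the paper's, and it has a real gap at exactly the point you flag as the main obstacle. The paper does not bootstrap from the $p=2$ case at all: it feeds the pair $f(x)=(\lambda_{k+1}-x)^{\alpha}$, $g(x)=(\lambda_{k+1}-x)^{\beta}$ directly into the abstract commutator inequality of Lemma 2.5 (quoted from Ilias--Makhoul), whose hypothesis $(f,g)\in\chi_{\lambda_{k+1}}$ is, by Lemma 2.2, precisely the condition $\alpha^{2}\le 2\beta$. The two-parameter statement then drops out in one stroke from the computations $[L,x_p]u_i=-2X_pu_i$, $\langle[L,x_p]u_i,x_pu_i\rangle_{L^2}=1$ and $\sum_{p}\bigl(\|X_pu_i\|^2+\|Y_pu_i\|^2\bigr)=\lambda_i$ --- the same ingredients you list for your base case. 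The generalized Chebyshev inequality (Lemma 2.8) is never used in the proof of Theorem 1.1; it appears only later, in Corollary 1.7.

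The gap is in your upgrade step. Your Cauchy--Schwarz reduction correctly leaves you needing $\sum_i A_i^{p}\le\frac{2}{n}\sum_i A_i^{p-1}\lambda_i$ with $A_i=\lambda_{k+1}-\lambda_i$ and $p=2\alpha-\beta$, but the tool you invoke cannot deliver it. Lemma 2.8 is a product-of-sums rearrangement, $\sum_i A_i^{\beta}B_i\sum_i A_i^{2\alpha-\beta-1}C_i\le\sum_i A_i^{\beta}\sum_i A_i^{2\alpha-\beta-1}B_iC_i$: it shuffles a monotone weight between the two factors of a product of sums, and says nothing about converting the unweighted inequality $\sum_i D_i\le 0$, where $D_i=A_i^{2}-\frac{2}{n}A_i\lambda_i$, into the weighted inequality $\sum_i A_i^{\,p-2}D_i\le 0$. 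You also never record where the hypothesis $\alpha^{2}\le 2\beta$ enters your argument. The step can be repaired, but by a different mechanism: $\alpha^{2}\le 2\beta$ forces $p=2\alpha-\beta\le 2\alpha-\alpha^{2}/2\le 2$, so the weight $c_i=A_i^{\,p-2}$ is nondecreasing in $i$ (where defined), while $D_i=A_i\bigl(\lambda_{k+1}-(1+\frac{2}{n})\lambda_i\bigr)$ changes sign at most once, from nonnegative to nonpositive, as $i$ increases; an Abel-summation (single-crossing) comparison $\sum_i c_iD_i\le c_{i_0}\sum_i D_i\le 0$ at the crossing index $i_0$ then closes the argument. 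As written, however, the proposal asserts rather than proves the decisive step, and attributes it to a lemma that does not apply.
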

\noindent where $\alpha  \in R,\beta  \ge 0$ and ${\alpha ^2} \le
2\beta $.

Inequality (1.4) is the generalization of Yang Type inequalities.
Using Theorem 1, it follows some interesting corollaries.
\begin{cor} \label{l1}
Let $\left\{ {{\lambda _i}} \right\}$ be the eigenvalues of (1.1),
then we have the Yang type first inequality
\begin{equation}\sum\limits_{i = 1}^k {({\lambda _{k + 1}} - {\lambda
_i}} {)^2} \le \frac{2}{n}\sum\limits_{i = 1}^k {({\lambda _{k + 1}}
- {\lambda _i}} ){\lambda _i}.\end{equation}
\end{cor}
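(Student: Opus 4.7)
The plan is to derive Corollary 1.2 as a direct specialization of Theorem 1.1 by choosing admissible exponents $\alpha$ and $\beta$ that collapse the right-hand side of (1.4) onto the desired form.

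First I would pick $\alpha=\beta=2$ in (1.4). The admissibility conditions $\beta\geq 0$ and $\alpha^{2}\leq 2\beta$ reduce to $2\geq 0$ and $4\leq 4$, both of which hold, so this is a legitimate substitution. With this choice the exponent in the second factor simplifies to $2\alpha-\beta-1=4-2-1=1$, so (1.4) becomes
\begin{equation*}
\sum_{i=1}^{k}(\lambda_{k+1}-\lambda_{i})^{2}\leq \sqrt{\frac{2}{n}}\left(\sum_{i=1}^{k}(\lambda_{k+1}-\lambda_{i})^{2}\sum_{i=1}^{k}(\lambda_{k+1}-\lambda_{i})\lambda_{i}\right)^{\frac{1}{2}}.
\end{equation*}

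Next I would square both sides and divide by $\sum_{i=1}^{k}(\lambda_{k+1}-\lambda_{i})^{2}$, which immediately yields (1.5). Before dividing, I should observe that if this sum vanishes then $\lambda_{k+1}=\lambda_{i}$ for all $i=1,\dots,k$, so each $(\lambda_{k+1}-\lambda_{i})^{2}$ is zero and the Yang-type first inequality (1.5) holds trivially (with both sides equal to zero); otherwise the sum is strictly positive and the division is valid.

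There is essentially no obstacle: the corollary is a bookkeeping consequence of Theorem 1.1, and the only care needed is to confirm that $(\alpha,\beta)=(2,2)$ sits on the boundary of the admissibility region $\alpha^{2}\leq 2\beta$ and to dispose of the degenerate case where the relevant sum is zero.
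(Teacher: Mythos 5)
Your proof is correct and follows exactly the paper's route: the authors also obtain (1.5) by setting $\alpha=\beta=2$ in (1.4) and then squaring and cancelling the common factor. Your extra remarks on the admissibility check $\alpha^{2}\le 2\beta$ and on the degenerate case $\sum_{i=1}^{k}(\lambda_{k+1}-\lambda_{i})^{2}=0$ are sound and merely make explicit what the paper leaves implicit.
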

\begin{cor} \label{l1}
Let $\left\{ {{\lambda _i}} \right\}$ be the eigenvalues of (1.1),
then we have the Payne-P\'{o}lya-Weinberger Type inequality
\begin{equation}{\lambda _{k + 1}} - {\lambda _k} \le \frac{2}{{nk}}\sum\limits_{i = 1}^k {{\lambda _i}}.
\end{equation}
\end{cor}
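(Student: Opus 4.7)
The plan is to deduce Corollary 1.3 directly from Theorem 1 by specializing the exponents $\alpha,\beta$, then combining with a trivial monotonicity bound on the differences $\lambda_{k+1}-\lambda_i$.

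First I would choose $\alpha=1$ and $\beta=1$ in inequality (1.4). These are admissible because $\beta=1\ge 0$ and $\alpha^2=1\le 2=2\beta$. With this choice, the exponent $2\alpha-\beta-1=0$, so the last factor in (1.4) collapses to $\sum_{i=1}^k\lambda_i$, and the inequality reads
\begin{equation*}
\sum_{i=1}^k(\lambda_{k+1}-\lambda_i)\le\sqrt{\frac{2}{n}}\left(\sum_{i=1}^k(\lambda_{k+1}-\lambda_i)\,\sum_{i=1}^k\lambda_i\right)^{\!1/2}.
\end{equation*}
Squaring both sides and dividing through by the (strictly positive) quantity $\sum_{i=1}^k(\lambda_{k+1}-\lambda_i)$ yields
\begin{equation*}
\sum_{i=1}^k(\lambda_{k+1}-\lambda_i)\le\frac{2}{n}\sum_{i=1}^k\lambda_i.
\end{equation*}

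Next I would use the monotonicity of the eigenvalue sequence. Since $\lambda_i\le\lambda_k$ for every $i=1,\dots,k$, we have $\lambda_{k+1}-\lambda_i\ge\lambda_{k+1}-\lambda_k$, and summing over $i$ gives
\begin{equation*}
k(\lambda_{k+1}-\lambda_k)\le\sum_{i=1}^k(\lambda_{k+1}-\lambda_i).
\end{equation*}
Chaining this with the previous bound and dividing by $k$ produces exactly (1.6).

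There is essentially no technical obstacle here: the whole content has been packed into Theorem 1 and the Chebyshev-type generalization used in its proof. The only minor point worth noting is that one must verify $\sum_{i=1}^k(\lambda_{k+1}-\lambda_i)>0$ before dividing; this is immediate because $\lambda_{k+1}\ge\lambda_i$ for all $i\le k$ with at least one strict inequality (in particular, $\lambda_{k+1}>\lambda_1$, since otherwise all $\lambda_i$ coincide, contradicting that the eigenvalue sequence tends to infinity unless $k=0$). Thus Corollary 1.3 follows as a clean specialization of Theorem 1 combined with a one-line monotonicity argument.
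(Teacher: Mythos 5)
Your proof is correct, and it is a genuinely cleaner route than the one in the paper. The paper also starts from Theorem 1.1 with $\alpha=\beta$, but it keeps $\alpha$ general (effectively forced to $\alpha=2$ by the combination of the admissibility constraint $\alpha^2\le 2\alpha$ and the final step $\bigl(\sum_i(\lambda_{k+1}-\lambda_i)\bigr)^{\alpha-1}\ge\sum_i(\lambda_{k+1}-\lambda_i)^{\alpha-1}$, which needs $\alpha\ge 2$), and it then has to invoke both the convexity inequality of Lemma 2.6 and Chebyshev's inequality (Lemma 2.7) to peel off the factor $\lambda_{k+1}-\lambda_k$ on the left and the factor $\sum_i\lambda_i$ on the right. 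Your choice $\alpha=\beta=1$ makes both of these steps unnecessary: the exponent $2\alpha-\beta-1$ vanishes, so the right-hand side is literally $\frac{2}{n}\sum_i\lambda_i$ after squaring and cancelling, and the left-hand side is handled by the one-line monotonicity bound $k(\lambda_{k+1}-\lambda_k)\le\sum_i(\lambda_{k+1}-\lambda_i)$. This buys a shorter proof with no auxiliary lemmas; what it gives up is nothing for this particular corollary (the paper's more general $\alpha$ yields no stronger conclusion here). One small criticism: your justification that $\sum_{i=1}^k(\lambda_{k+1}-\lambda_i)>0$ is not right as stated --- having $\lambda_1=\cdots=\lambda_{k+1}$ for a fixed finite $k$ does not contradict $\lambda_j\to+\infty$, since only finitely many eigenvalues are involved. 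The correct fix is to observe that if $\sum_{i=1}^k(\lambda_{k+1}-\lambda_i)=0$ then $\lambda_{k+1}=\lambda_k$, and (1.6) holds trivially because $\lambda_i>0$; otherwise the sum is strictly positive and your division is legitimate.
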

\begin{cor} \label{l1}
Let $\left\{ {{\lambda _i}} \right\}$ be the eigenvalues of (1.1),
then we have the Yang type second inequality
\begin{equation}{\lambda _{k + 1}} \le \left( {1 + \frac{2}{n}} \right)\frac{1}{k}\left( {\sum\limits_{i = 1}^k {{\lambda _i}} } \right).
\end{equation}
\end{cor}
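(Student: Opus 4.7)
The plan is to derive Corollary 1.4 directly from Corollary 1.2, by expanding the squares, invoking Cauchy--Schwarz, and solving a quadratic inequality in $\lambda_{k+1}$. This is purely algebraic once the Yang type first inequality is granted.

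First I would expand both sides of (1.5). Writing $S := \sum_{i=1}^k \lambda_i$ and $Q := \sum_{i=1}^k \lambda_i^2$, the left-hand side becomes $k\lambda_{k+1}^2 - 2\lambda_{k+1}S + Q$ while the right-hand side becomes $\tfrac{2}{n}(\lambda_{k+1}S - Q)$. Rearranging, Corollary 1.2 is equivalent to
\[
k\lambda_{k+1}^2 - \left(2+\tfrac{2}{n}\right)\lambda_{k+1}\,S + \left(1+\tfrac{2}{n}\right)Q \;\le\; 0.
\]

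Next I would get rid of $Q$ by the Cauchy--Schwarz inequality $Q \ge S^2/k$, which only strengthens the above bound since the coefficient $1+\tfrac{2}{n}$ of $Q$ is positive. Multiplying through by $k$, the inequality becomes a quadratic in the quantity $k\lambda_{k+1}$:
\[
(k\lambda_{k+1})^2 - \left(2+\tfrac{2}{n}\right)(k\lambda_{k+1})\,S + \left(1+\tfrac{2}{n}\right)S^2 \;\le\; 0.
\]

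Finally I would solve this quadratic. Its discriminant is
\[
\left(2+\tfrac{2}{n}\right)^2 - 4\left(1+\tfrac{2}{n}\right) \;=\; \tfrac{4}{n^2},
\]
so the two roots in $k\lambda_{k+1}$ are $S$ and $\bigl(1+\tfrac{2}{n}\bigr)S$. The inequality forces $k\lambda_{k+1}$ to lie between these two roots, and in particular $k\lambda_{k+1} \le \bigl(1+\tfrac{2}{n}\bigr)S$, which upon dividing by $k$ is precisely (1.7).

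There is essentially no obstacle here: the only substantive ingredient is the Yang type first inequality of Corollary 1.2, and everything else is a clean algebraic manipulation together with the elementary Cauchy--Schwarz bound $Q \ge S^2/k$. The mild care needed is to verify that the coefficient of $Q$ remains positive so that replacing $Q$ by its Cauchy--Schwarz lower bound preserves the direction of the inequality, and that the larger rather than the smaller root of the quadratic is the one relevant for an upper bound on $\lambda_{k+1}$.
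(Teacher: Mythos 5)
Your proof is correct, but it follows a genuinely different route from the paper's. You derive (1.7) directly from the Yang type first inequality (1.5) (the $\alpha=\beta=2$ case) by expanding both sides in terms of $S=\sum_{i=1}^k\lambda_i$ and $Q=\sum_{i=1}^k\lambda_i^2$, eliminating $Q$ via the Cauchy--Schwarz bound $Q\ge S^2/k$ (legitimately, since the coefficient $1+\tfrac{2}{n}$ of $Q$ is positive), and then locating $k\lambda_{k+1}$ between the roots $S$ and $\bigl(1+\tfrac{2}{n}\bigr)S$ of the resulting quadratic; your discriminant computation $\bigl(2+\tfrac{2}{n}\bigr)^2-4\bigl(1+\tfrac{2}{n}\bigr)=\tfrac{4}{n^2}$ and the factorization are right, and this is the classical way Yang's second inequality is extracted from his first. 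The paper instead works with the general $\alpha=\beta$ form of Theorem 1.1 for $1\le\alpha\le 2$, rewrites $\lambda_{k+1}-\lambda_i$ inside the sum to get $\lambda_{k+1}\sum_i(\lambda_{k+1}-\lambda_i)^{\alpha-1}\le\bigl(1+\tfrac{2}{n}\bigr)\sum_i(\lambda_{k+1}-\lambda_i)^{\alpha-1}\lambda_i$, and then applies Chebyshev's sum inequality (Lemma 2.7) to the oppositely ordered sequences $(\lambda_{k+1}-\lambda_i)^{\alpha-1}$ and $\lambda_i$ before cancelling the common positive factor. Your argument is more elementary in that it needs no Chebyshev lemma and no parameter $\alpha$, only the single inequality (1.5) plus Cauchy--Schwarz; the paper's argument avoids expanding squares and the quadratic formula and sits more naturally inside its general $(\alpha,\beta)$ framework. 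Both are complete proofs of (1.7).
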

\begin{thrm}\label{t1}
Let $\left\{ {{\lambda _i}} \right\}$ be the eigenvalues of (1.2),
then\begin{align}&\sum\limits_{i = 1}^k {({\lambda _{k + 1}} -
{\lambda _i}} {)^\alpha }
\\
 &\le
\frac{{2\sqrt {n + 1} }}{n}{\left[ {\sum\limits_{i = 1}^k
{{{({\lambda _{k + 1}} - {\lambda _i})}^\beta }} \lambda
_i^{\frac{1}{2}}} \right]^{\frac{1}{2}}}{\left[ {\sum\limits_{i =
1}^k {{{({\lambda _{k + 1}} - {\lambda _i})}^{2\alpha  - \beta  -
1}}} \lambda _i^{\frac{1}{2}}} \right]^{\frac{1}{2}}}.\notag
\end{align}
\end{thrm}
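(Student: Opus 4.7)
The plan is to adapt the Rayleigh--Ritz / commutator argument used for Theorem~1.1 to the fourth-order operator $L^2$. For each $i=1,\dots,k$ and each coordinate function $z_s$ drawn from $\{x_1,\dots,x_n,y_1,\dots,y_n,t\}$, I would form the trial function
\[
\phi_{si} = z_s u_i - \sum_{r=1}^{k} a_{sir}\, u_r,\qquad a_{sir} = \langle z_s u_i,\,u_r\rangle.
\]
The clamped boundary condition in (1.2) forces $u_r = \partial_\nu u_r = 0$ on $\partial\Omega$, so $\phi_{si}$ satisfies both conditions and is admissible; by construction it is also $L^2$-orthogonal to $u_1,\dots,u_k$. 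Combining the Rayleigh--Ritz characterization of $\lambda_{k+1}$ with $L^2(z_s u_i) = \lambda_i z_s u_i + [L^2,z_s]u_i$ then yields the working inequality
\[
(\lambda_{k+1} - \lambda_i)\|\phi_{si}\|^2 \;\le\; \langle [L^2,z_s]u_i,\,\phi_{si}\rangle .
\]

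The next step is to analyse the commutator via the identity $[L^2,z_s] = L[L,z_s] + [L,z_s]L$. For the Greiner vector fields the quantities $X_j z_s$ and $Y_j z_s$ are given in closed form, so $[L,z_s]$ is a first-order horizontal operator whose action on $u_i$ can be controlled against $\|\nabla_L u_i\|$. A Cauchy--Schwarz of the right-hand side, combined with the identity $\|L u_i\|^2 = \lambda_i$ (valid after integrating by parts, the boundary terms vanishing because of the clamped condition and the non-characteristic assumption on $\partial\Omega$), is what introduces the factors $\lambda_i^{1/2}$ that distinguish (1.8) from the second-order analogue (1.4). Summing in $s$ then collapses the commutator contributions through the pointwise closure identity $\sum_s (|X_j z_s|^2 + |Y_j z_s|^2) = \mathrm{const}$, producing a base Yang-type bound of the shape
\[
\sum_{i=1}^k (\lambda_{k+1}-\lambda_i)^{2}\;\le\; C \sum_{i=1}^k (\lambda_{k+1}-\lambda_i)\,\lambda_i^{1/2}.
\]

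To upgrade this to the two-parameter form (1.8), I would multiply by appropriate powers of $(\lambda_{k+1}-\lambda_i)$ before summing in $i$ and then invoke the generalized Chebyshev inequality established earlier in the paper; it is exactly this step that redistributes the exponents into the two bracketed factors containing $(\lambda_{k+1}-\lambda_i)^\beta \lambda_i^{1/2}$ and $(\lambda_{k+1}-\lambda_i)^{2\alpha-\beta-1}\lambda_i^{1/2}$. The constant $\tfrac{2\sqrt{n+1}}{n}$ then drops out of combining the closure constant with the count of effective trial directions $s$. The main technical obstacle I anticipate is the $t$-direction when $\sigma\ge 2$: the coefficient $|z|^{2\sigma-2}$ appearing in $X_j t$ and $Y_j t$ destroys the clean closure identity that is available on the Heisenberg group ($\sigma = 1$). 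Matching the precise constant---in particular the $\sqrt{n+1}$ rather than $\sqrt{n}$, which suggests that $z_s = t$ genuinely enters as a trial direction---will require careful bookkeeping of the unbounded commutator contribution, presumably exploiting the uniform boundedness of $|z|$ on $\Omega$.
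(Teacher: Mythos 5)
Your overall framework (trial functions $z_su_i-\sum_r\langle z_su_i,u_r\rangle u_r$, the Rayleigh--Ritz characterization, commutator identities, Cauchy--Schwarz) is the machinery that underlies the abstract Ilias--Makhoul inequality the paper invokes as Lemma 2.5, so the starting point is sound. But two of your structural decisions would derail the proof.

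First, the paper never uses $t$ as a multiplier. It applies Lemma 2.5 with only the $2n$ horizontal coordinates $B_1=x_1,\dots,B_{2n}=y_n$ and $T_1=X_1,\dots,T_{2n}=Y_n$, so the ``unbounded commutator contribution from the $t$-direction'' that you flag as the main technical obstacle simply does not arise. Your inference that the $\sqrt{n+1}$ signals an extra trial direction is wrong: the $n+1$ comes from the commutator computation
\[
\sum_{p=1}^{n}\Bigl(\bigl\langle [L^2,x_p]u_i,x_pu_i\bigr\rangle+\bigl\langle [L^2,y_p]u_i,y_pu_i\bigr\rangle\Bigr)
=4n\int_\Omega Lu_i\cdot u_i+4\sum_{p=1}^{n}\bigl(\|X_pu_i\|^2+\|Y_pu_i\|^2\bigr)
=4(n+1)\int_\Omega Lu_i\cdot u_i,
\]
i.e.\ $n$ copies of $2\int Lu_i\cdot u_i$ from each of the $x$- and $y$-families plus one more copy assembled from the $-4\int X_p^2u_i\cdot u_i$ and $-4\int Y_p^2u_i\cdot u_i$ terms. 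Chasing the $t$-direction would send you into exactly the difficulty you predict, for no gain.

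Second, your plan to first prove the base case $\sum_i(\lambda_{k+1}-\lambda_i)^2\le C\sum_i(\lambda_{k+1}-\lambda_i)\lambda_i^{1/2}$ and then ``upgrade'' to the two-parameter inequality by multiplying by powers of $(\lambda_{k+1}-\lambda_i)$ and invoking the generalized Chebyshev inequality does not work. Once the sum over $i$ has been performed you cannot reinsert the weights $(\lambda_{k+1}-\lambda_i)^\beta$ and $(\lambda_{k+1}-\lambda_i)^{2\alpha-\beta-1}$ term by term, and Lemma 2.8 runs in the wrong direction for this purpose: in the paper it is used \emph{after} Theorem 1.5, to pass from (1.8) to Corollary 1.7. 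The two-parameter structure, together with the constraint $\alpha^2\le 2\beta$, must be built into the per-index Cauchy--Schwarz step before summation; this is precisely the role of the couple $f(x)=(\lambda-x)^\alpha$, $g(x)=(\lambda-x)^\beta$ belonging to $\chi_{\lambda_{k+1}}$ in Lemma 2.5. You need to carry these weights through your trial-function argument from the outset rather than bolt them on at the end.
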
\noindent where $\alpha  \in R,\beta  \ge 0,$ and ${\alpha ^2} \le 2\beta
$.
\begin{cor} \label{l1}
Let $\left\{ {{\lambda _i}} \right\}$ be the eigenvalues of (1.2),
then
\begin{equation}\sum\limits_{i = 1}^k {({\lambda _{k + 1}} - {\lambda _i}} {)^2} \le \frac{{2\sqrt {n + 1} }}{n}{\left[ {\sum\limits_{i = 1}^k {({\lambda _{k + 1}} - {\lambda _i}} )\lambda _i^{\frac{1}{2}}} \right]^{\frac{1}{2}}}{\left[ {\sum\limits_{i = 1}^k {{{({\lambda _{k + 1}} - {\lambda _i})}^2}} \lambda _i^{\frac{1}{2}}} \right]^{\frac{1}{2}}}.
\end{equation}
\end{cor}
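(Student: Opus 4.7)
The plan is direct: Corollary 1.7 is a specialization of Theorem \ref{t1} to specific values of the parameters $\alpha$ and $\beta$ appearing in inequality (1.8). First I would read off the correct choice by matching exponents. The left-hand side $\sum_i(\lambda_{k+1}-\lambda_i)^2$ of (1.9) forces $\alpha=2$. The two bracketed factors on the right of (1.9) carry powers $(\lambda_{k+1}-\lambda_i)^1$ and $(\lambda_{k+1}-\lambda_i)^2$, each weighted by $\lambda_i^{1/2}$; in (1.8) the corresponding exponents are $\beta$ and $2\alpha-\beta-1$. Since these two bracketed factors are interchangeable (their product is symmetric), either $\beta=1$ with $2\alpha-\beta-1=2$ or $\beta=2$ with $2\alpha-\beta-1=1$ reproduces the right-hand side of (1.9) verbatim, as the prefactor $2\sqrt{n+1}/n$ in Theorem \ref{t1} does not depend on $\alpha$ or $\beta$.

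The second step is to verify the admissibility hypothesis $\beta\ge 0$ and $\alpha^2\le 2\beta$ of Theorem \ref{t1}. The tempting choice $(\alpha,\beta)=(2,1)$ fails this constraint, since $4\not\le 2$, so I would take $(\alpha,\beta)=(2,2)$ instead; then $\beta=2\ge 0$ and $\alpha^2=4=2\beta$, so the hypothesis holds (with equality). Substituting $\alpha=\beta=2$ directly into (1.8) and using the symmetry of the two bracketed factors yields exactly (1.9), which is the claim of the corollary.

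I do not anticipate any genuine obstacle, since all of the analytic work has already been absorbed into Theorem \ref{t1}. The only point requiring care is the exponent bookkeeping and the observation that one must use the boundary pair $(\alpha,\beta)=(2,2)$ rather than the superficially more natural $(2,1)$ in order to stay within the admissible region $\alpha^2\le 2\beta$. With that choice made, the derivation of (1.9) is a one-line substitution into (1.8).
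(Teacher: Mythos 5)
Your proposal is correct and matches the paper's proof exactly: the paper also obtains (1.9) by taking $\alpha=\beta=2$ in (1.8), for which $\alpha^2=4=2\beta$ so the admissibility condition holds with equality, and the two bracketed factors (with exponents $\beta=2$ and $2\alpha-\beta-1=1$) appear in the opposite order only because their product is symmetric. Your extra remark ruling out $(\alpha,\beta)=(2,1)$ is sound bookkeeping but not needed beyond confirming the choice.
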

\begin{cor} \label{l1}
Let $\left\{ {{\lambda _i}} \right\}$ be the eigenvalues of (1.2),
then
\begin{align}&\sum\limits_{i = 1}^k {({\lambda _{k + 1}} - {\lambda _i}} {)^\alpha }
\\
&\le \frac{{2\sqrt {n + 1} }}{n}{\left[ {\sum\limits_{i = 1}^k
{{{({\lambda _{k + 1}} - {\lambda _i})}^\beta }} }
\right]^{\frac{1}{2}}}{\left[ {\sum\limits_{i = 1}^k {{{({\lambda
_{k + 1}} - {\lambda _i})}^{2\alpha  - \beta  - 1}}{\lambda _i}} }
\right]^{\frac{1}{2}}}.\notag
\end{align}
\end{cor}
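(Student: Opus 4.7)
The plan is to deduce this corollary directly from Theorem~\ref{t1} by applying the Cauchy--Schwarz inequality to each bracketed factor on its right-hand side and then invoking a Chebyshev-type rearrangement inequality to redistribute the $\lambda_i^{1/2}$ weights. Write $a_i := \lambda_{k+1} - \lambda_i \ge 0$. The splitting $a_i^{\gamma}\lambda_i^{1/2} = (a_i^{\gamma})^{1/2}(a_i^{\gamma}\lambda_i)^{1/2}$ combined with Cauchy--Schwarz yields, for every real $\gamma$,
\[
\sum_{i=1}^{k} a_i^{\gamma}\lambda_i^{1/2} \le \Bigl(\sum_{i=1}^{k} a_i^{\gamma}\Bigr)^{\!1/2}\Bigl(\sum_{i=1}^{k} a_i^{\gamma}\lambda_i\Bigr)^{\!1/2}.
\]
Using this with $\gamma = \beta$ and $\gamma = 2\alpha - \beta - 1$ on the right-hand side of Theorem~\ref{t1}, and setting $A = \sum a_i^{\beta}$, $A' = \sum a_i^{2\alpha-\beta-1}$, $B = \sum a_i^{\beta}\lambda_i$, $B' = \sum a_i^{2\alpha-\beta-1}\lambda_i$, I arrive at
\[
\sum_{i=1}^{k} a_i^{\alpha} \le \frac{2\sqrt{n+1}}{n}\bigl[(AB)(A'B')\bigr]^{1/4}.
\]

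To close the argument it then suffices to establish the Chebyshev-type estimate $BA' \le AB'$, that is,
\[
\Bigl(\sum_{i=1}^k a_i^{\beta}\lambda_i\Bigr)\Bigl(\sum_{i=1}^k a_i^{2\alpha-\beta-1}\Bigr) \le \Bigl(\sum_{i=1}^k a_i^{\beta}\Bigr)\Bigl(\sum_{i=1}^k a_i^{2\alpha-\beta-1}\lambda_i\Bigr),
\]
because then $[(AB)(A'B')]^{1/4} = [(AB')(A'B)]^{1/4} \le (AB')^{1/2}$, which is exactly the right-hand side displayed in the corollary. I would verify $BA' \le AB'$ by the standard symmetrization identity
\[
AB' - BA' = \tfrac{1}{2}\sum_{i,j=1}^{k}\bigl(a_i^{\beta}a_j^{2\alpha-\beta-1} - a_i^{2\alpha-\beta-1}a_j^{\beta}\bigr)(\lambda_j - \lambda_i),
\]
which is nonnegative provided the ratio $a_i^{\beta}/a_i^{2\alpha-\beta-1} = a_i^{2\beta-2\alpha+1}$ is monotone in $i$ in the sense opposite to $\lambda_i$. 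Since $a_i$ is decreasing and $\lambda_i$ is increasing in $i$, the required condition reduces to $2\beta - 2\alpha + 1 \ge 0$, i.e.\ $\alpha \le \beta + \tfrac{1}{2}$.

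The main (and only) real obstacle is thus to see that the hypothesis $\alpha^2 \le 2\beta$ of the corollary implies $\alpha \le \beta + \tfrac{1}{2}$; this follows immediately from $(\alpha - 1)^2 \ge 0$, which gives $2\alpha - 1 \le \alpha^2 \le 2\beta$ and hence $\alpha \le \beta + \tfrac{1}{2}$. Once this inequality is in hand, the Chebyshev-type estimate holds and the stated bound is obtained.
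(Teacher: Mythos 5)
Your proof is correct, but it reaches the bound by a different decomposition from the paper's. The paper squares the inequality of Theorem 1.5 and then applies its Lemma 2.8 (the generalized Chebyshev inequality, proved there by induction on $k$) in a single stroke, taking $A_i=\lambda_{k+1}-\lambda_i$ and $B_i=C_i=\lambda_i^{1/2}$, which converts the product $\bigl(\sum_i A_i^{\beta}\lambda_i^{1/2}\bigr)\bigl(\sum_i A_i^{2\alpha-\beta-1}\lambda_i^{1/2}\bigr)$ directly into $\bigl(\sum_i A_i^{\beta}\bigr)\bigl(\sum_i A_i^{2\alpha-\beta-1}\lambda_i\bigr)$. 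You instead split each factor by Cauchy--Schwarz and then need only the comparison $BA'\le AB'$, which you establish by the standard symmetrization identity; in effect you re-derive the instance of Lemma 2.8 with $B_i=\lambda_i$ and $C_i=1$ rather than $B_i=C_i=\lambda_i^{1/2}$, and then recover the paper's intermediate inequality from $[(AB)(A'B')]^{1/4}=[(AB')(A'B)]^{1/4}\le (AB')^{1/2}$. Both arguments pivot on the identical arithmetic fact that $\alpha^2\le 2\beta$ together with $2\alpha-1\le\alpha^2$ forces $2\beta-2\alpha+1\ge 0$, which is precisely the inequality the paper isolates inside its inductive proof of Lemma 2.8. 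What your route buys is self-containedness: it needs only Cauchy--Schwarz and a two-line symmetrization rather than the inductive lemma, at the cost of an extra layer of fourth roots. The paper's route is shorter once Lemma 2.8 is in hand. The only caveat, shared equally by the paper, is the degenerate situation where some $\lambda_{k+1}-\lambda_i=0$ while the exponent $2\alpha-\beta-1$ is negative; both proofs tacitly assume this does not occur.
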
\noindent where $\alpha  \in R,\beta  \ge 0,$ and ${\alpha ^2} \le 2\beta
$.
\begin{cor} \label{l1}
Let $\left\{ {{\lambda _i}} \right\}$ be the eigenvalues of (1.2),
then we have
\begin{equation}\sum\limits_{i = 1}^k {({\lambda _{k + 1}} - {\lambda _i}} {)^2} \le \frac{{4\left( {n + 1} \right)}}{{{n^2}}}\sum\limits_{i = 1}^k {({\lambda _{k + 1}} - {\lambda _i})} {\lambda _i}.
\end{equation}
\end{cor}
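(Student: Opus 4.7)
The plan is to obtain Corollary 1.8 as a direct consequence of Corollary 1.7 by choosing the free parameters $\alpha$ and $\beta$ appropriately. Specifically, I would take $\alpha = 2$ and $\beta = 2$, observing that the constraint $\alpha^{2} \le 2\beta$ is satisfied with equality ($4 = 4$) and that $\beta = 2 \ge 0$, so Corollary 1.7 applies.

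With this choice, the exponent $2\alpha - \beta - 1$ on the right-hand side of (1.10) becomes $2\cdot 2 - 2 - 1 = 1$, so (1.10) reduces to
\begin{equation*}
\sum_{i=1}^{k}(\lambda_{k+1}-\lambda_i)^{2}
\le \frac{2\sqrt{n+1}}{n}
\Bigl[\sum_{i=1}^{k}(\lambda_{k+1}-\lambda_i)^{2}\Bigr]^{1/2}
\Bigl[\sum_{i=1}^{k}(\lambda_{k+1}-\lambda_i)\lambda_i\Bigr]^{1/2}.
\end{equation*}
I would then square both sides and, in the nontrivial case $\sum_{i=1}^{k}(\lambda_{k+1}-\lambda_i)^{2} > 0$, cancel the common factor $\sum_{i=1}^{k}(\lambda_{k+1}-\lambda_i)^{2}$ to arrive at
\begin{equation*}
\sum_{i=1}^{k}(\lambda_{k+1}-\lambda_i)^{2}
\le \frac{4(n+1)}{n^{2}}\sum_{i=1}^{k}(\lambda_{k+1}-\lambda_i)\lambda_i,
\end{equation*}
which is precisely (1.11). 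The degenerate case $\sum_{i=1}^{k}(\lambda_{k+1}-\lambda_i)^{2} = 0$ forces $\lambda_i = \lambda_{k+1}$ for all $i \le k$, and both sides of (1.11) vanish, so the inequality holds trivially.

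There is essentially no obstacle here: the only nontrivial content is the verification that $\alpha = \beta = 2$ is an admissible pair in Corollary 1.7 (the boundary case of the constraint $\alpha^{2} \le 2\beta$), and the remaining work is a one-line algebraic manipulation. The substantive mathematical input — the commutator identities for the bi-sub-Laplacian, the generalized Chebyshev inequality, and the Cauchy--Schwarz step leading to (1.10) — is entirely absorbed into Theorem~1.5 / Corollary~1.7, which I am allowed to assume.
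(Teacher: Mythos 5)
Your proposal is correct and is exactly the paper's own proof: the authors likewise obtain (1.11) by setting $\alpha = \beta = 2$ in Corollary 1.7 (so that $2\alpha-\beta-1=1$), then squaring and cancelling the common factor $\sum_{i=1}^{k}(\lambda_{k+1}-\lambda_i)^{2}$. Your explicit handling of the degenerate case where that factor vanishes is a small completeness detail the paper leaves implicit.
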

\begin{cor} \label{l1}
Let $\left\{ {{\lambda _i}} \right\}$ be the eigenvalues of (1.2),
then we have
\begin{equation}{\lambda _{k + 1}} - {\lambda _k} \le \frac{{4\left( {n + 1} \right)}}{{{n^2}{k^2}}}{\left( {\sum\limits_{i = 1}^k {\lambda _i^{\frac{1}{2}}} } \right)^2}.
\end{equation}
\end{cor}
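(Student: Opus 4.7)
The plan is to deduce this Payne--P\'{o}lya--Weinberger type bound for the biharmonic problem (1.2) by specializing the general Yang-type inequality (1.8) of Theorem~2 to suitable exponents and then invoking Chebyshev's sum inequality to decouple the resulting weighted sum into a product. Concretely, I would apply Theorem~2 with $\alpha=\beta=1$. The admissibility conditions $\beta\geq 0$ and $\alpha^{2}\leq 2\beta$ are both satisfied, and the exponent $2\alpha-\beta-1$ equals $0$, so (1.8) collapses to
\[
\sum_{i=1}^{k}(\lambda_{k+1}-\lambda_{i})\leq\frac{2\sqrt{n+1}}{n}\left[\sum_{i=1}^{k}(\lambda_{k+1}-\lambda_{i})\lambda_{i}^{1/2}\right]^{1/2}\left[\sum_{i=1}^{k}\lambda_{i}^{1/2}\right]^{1/2}.
\]

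The remaining task is to dispose of the weighted sum $\sum(\lambda_{k+1}-\lambda_{i})\lambda_{i}^{1/2}$ on the right. Since $\lambda_{k+1}-\lambda_{i}$ is non-increasing in $i$ while $\lambda_{i}^{1/2}$ is non-decreasing, the classical Chebyshev sum inequality applied to these oppositely ordered sequences gives
\[
\sum_{i=1}^{k}(\lambda_{k+1}-\lambda_{i})\lambda_{i}^{1/2}\leq\frac{1}{k}\left(\sum_{i=1}^{k}(\lambda_{k+1}-\lambda_{i})\right)\left(\sum_{i=1}^{k}\lambda_{i}^{1/2}\right).
\]
Substituting into the previous display and abbreviating $A=\sum_{i=1}^{k}(\lambda_{k+1}-\lambda_{i})$ and $B=\sum_{i=1}^{k}\lambda_{i}^{1/2}$, one obtains $A\leq\frac{2\sqrt{n+1}}{n\sqrt{k}}\,A^{1/2}B$; squaring and cancelling one factor of $A$ produces $A\leq\frac{4(n+1)}{n^{2}k}B^{2}$.

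To finish, observe that $\lambda_{k+1}-\lambda_{i}\geq\lambda_{k+1}-\lambda_{k}$ for every $1\leq i\leq k$, so the left-hand side $A$ is at least $k(\lambda_{k+1}-\lambda_{k})$; dividing through by $k$ delivers the desired bound (1.11). I do not anticipate any serious obstacle: the argument is a two-step combination of Theorem~2 with a classical inequality. The only bit of foresight required is the choice $(\alpha,\beta)=(1,1)$, picked so that one factor on the right of (1.8) collapses to $\sum\lambda_{i}^{1/2}$ while the other contains exactly the pairing $(\lambda_{k+1}-\lambda_{i})\lambda_{i}^{1/2}$ on which Chebyshev (available because the two sequences have opposite monotonicity) decouples the weight into the product we need.
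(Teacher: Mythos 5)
Your argument is correct and follows essentially the same route as the paper: the paper also starts from (1.8) with $\alpha=\beta$ (keeping a general $\alpha\in[1,2]$ rather than fixing $\alpha=\beta=1$), applies Chebyshev's inequality (Lemma 2.7) to decouple the weighted sums into $\frac{1}{k}$ times products, and finishes with the bound $\lambda_{k+1}-\lambda_i\geq\lambda_{k+1}-\lambda_k$. Your specialization to $\alpha=\beta=1$ is admissible and only streamlines the bookkeeping.
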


These results are new even for Laplacian on the Euclidean space and
sub-Laplacian on the Heisenberg group.

This paper is arranged as follows. In Section 2 the definition of
function couple ${\chi _\lambda }$ and its properties are given; two
elementary inequalities (see Lemmas 2.6 and 2.8) are proved and
examples of noncharacteristics and characteristics domains for
vector fields are listed. The proofs of Theorem 1.1 and Corollaries
1.2-1.4 are put in Section 3. The proofs of Theorem 1.5 and
Corollaries 1.6-1.9 are given in Section 4.
\section{Preliminary results} \label{ns}
\begin{definition}\label{d1}(see [5]) A couple $(f,g)$ of functions on the
interval $(0,\lambda )$ ($\lambda  > 0$) is said to belong to ${\chi
_\lambda }$ provided that
\begin{enumerate}[(i)]
\item $f$ and $g$ are positive. \item $f$ and $g$ satisfy \[{\left( {\frac{{f(x) -
f(y)}}{{x - y}}} \right)^2} + {\left( {\frac{{{{\left( {f(x)}
\right)}^2}}}{{g(x)(\lambda  - x)}} + \frac{{{{\left( {f(y)}
\right)}^2}}}{{g(y)(\lambda  - y)}}} \right)}\left( {\frac{{g(x) -
g(y)}}{{x - y}}} \right) \le 0,\]
\end{enumerate}
\end{definition}
\noindent for any $x,y \in (0,\lambda )$, $x \ne y$.
\begin{lem} \label{l1}
Let $(f,g) \in {\chi _\lambda }$, then $g$ must be nonincreasing; if
$f(x) = {(\lambda  - x)^\alpha }$, $g(x) = {(\lambda  - x)^\beta }$,
then ${\alpha ^2} \le 2\beta $.
\end{lem}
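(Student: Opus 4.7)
The plan is to handle the two assertions of the lemma separately, in each case relying only on condition (ii) of Definition 2.1 together with positivity of $f$ and $g$.

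For the first assertion (monotonicity of $g$), I would argue directly from the sign structure of the defining inequality. The term $\bigl(\frac{f(x)-f(y)}{x-y}\bigr)^2$ is always nonnegative, and the coefficient $\frac{f(x)^2}{g(x)(\lambda-x)} + \frac{f(y)^2}{g(y)(\lambda-y)}$ is strictly positive since $f,g>0$ on $(0,\lambda)$ and $x,y\in(0,\lambda)$. For the sum displayed in Definition 2.1(ii) to be $\le 0$, it is therefore necessary that $\frac{g(x)-g(y)}{x-y}\le 0$ for every pair $x\neq y$ in $(0,\lambda)$, which is precisely the statement that $g$ is nonincreasing.

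For the second assertion, I would substitute $f(x)=(\lambda-x)^\alpha$ and $g(x)=(\lambda-x)^\beta$ into (ii) and then pass to the limit $y\to x$. Since the two difference quotients have well-defined limits (the functions are smooth on $(0,\lambda)$), the inequality becomes
\[
(f'(x))^2 + \frac{2 f(x)^2}{g(x)(\lambda-x)}\, g'(x) \le 0.
\]
Computing $f'(x) = -\alpha(\lambda-x)^{\alpha-1}$ and $g'(x) = -\beta(\lambda-x)^{\beta-1}$, and simplifying, this reduces to
\[
\bigl(\alpha^2 - 2\beta\bigr)(\lambda-x)^{2\alpha-2}\le 0,
\]
for every $x\in(0,\lambda)$. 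Since the power factor is strictly positive, $\alpha^2\le 2\beta$ follows.

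The only delicate point is the limiting step $y\to x$: one must observe that (ii) holds for all pairs $x\neq y$, so by continuity it persists in the limit, which is where the clean derivative form is obtained. This is routine rather than genuinely hard, and no step of the proof should require more than a short calculation.
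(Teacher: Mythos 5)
Your proof is correct and follows essentially the same route as the paper: the sign analysis of the two terms in Definition 2.1(ii) for the monotonicity of $g$, and the limit $y\to x$ yielding $(f'(x))^2+\frac{2f(x)^2}{g(x)(\lambda-x)}g'(x)\le 0$, which for the power functions reduces to $(\alpha^2-2\beta)(\lambda-x)^{2\alpha-2}\le 0$. The only difference is cosmetic (the paper divides by $f(x)^2$ before substituting), and you actually spell out the first assertion more explicitly than the paper does.
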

\begin{proof}
From Definition 2.1 we see that $g$ must be nonincreasing. Because
$f$ and $g$ satisfy\[{\left( {\frac{{f(x) - f(y)}}{{x - y}}}
\right)^2} + {\left( {\frac{{{{\left( {f(x)}
\right)}^2}}}{{g(x)(\lambda  - x)}} + \frac{{{{\left( {f(y)}
\right)}^2}}}{{g(y)(\lambda  - y)}}} \right)}\left( {\frac{{g(x) -
g(y)}}{{x - y}}} \right) \le 0,\] letting $y \to x$, we have
\[{\left( {f'(x)} \right)^2} + \frac{{2{{\left( {f(x)}
\right)}^2}}}{{g(x)(\lambda  - x)}}g'(x) \le 0\]and then \[{\left(
{\frac{{f'(x)}}{{f(x)}}} \right)^2} + \frac{2}{{(\lambda  -
x)}}\frac{{g'(x)}}{{g(x)}} \le 0.\]Taking $f(x) = {(\lambda  -
x)^\alpha }$, $g(x) = {(\lambda  - x)^\beta }$, it follows ${\alpha
^2} \le 2\beta $.
\end{proof}
\begin{definition}(see [5]) For any two operators $A$ and
$B$, their commutator $\left[ {A,B} \right]$ is defined by $\left[
{A,B} \right] = AB - BA$.
\end{definition}
\begin{lem}For $p = 1,2, \cdot  \cdot  \cdot ,n,$ we have \begin{equation}L({x_p}{u_i}) = {x_p}L{u_i} - 2{X_p}{u_i},\end{equation}
\begin{equation}\left[ {L,{x_p}} \right]{u_i} =  - 2{X_p}{u_i}.\end{equation}
\end{lem}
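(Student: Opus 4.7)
The plan is to establish the operator identity $[L, x_p] = -2X_p$; both (2.1) and (2.2) then follow from it by applying to $u_i$. Since (2.2) is, by definition of the commutator, $L(x_p u_i) - x_p L u_i$, and (2.1) is the same relation rewritten to isolate $L(x_p u_i)$, the two assertions are equivalent, so it suffices to prove the operator identity.

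First I would compute the first-order commutators $[X_j, x_p]$ and $[Y_j, x_p]$. Each $X_j$ and $Y_j$ is a first-order linear differential operator, hence acts as a derivation on smooth functions, so $[X_j, x_p] u = (X_j x_p) u$ for any smooth $u$. From the explicit form $X_j = \partial_{x_j} + 2\sigma y_j |z|^{2\sigma-2}\partial_t$ one reads off $X_j x_p = \delta_{jp}$, because $\partial_{x_j} x_p = \delta_{jp}$ and the $\partial_t$-term annihilates $x_p$ (which is independent of $t$). The analogous computation gives $Y_j x_p = 0$. Hence, as operators, $[X_j, x_p] = \delta_{jp}$ (multiplication by a scalar) and $[Y_j, x_p] = 0$.

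Next I would lift these to commutators with the squared fields via the standard Leibniz-type identity $[A^2, B] = A[A,B] + [A,B]A$, which yields $[X_j^2, x_p] = 2\delta_{jp} X_j$ and $[Y_j^2, x_p] = 0$. Summing over $j$ gives $[\Delta_L, x_p] = 2 X_p$, and therefore $[L, x_p] = -[\Delta_L, x_p] = -2X_p$. Applying this identity to $u_i$ directly yields (2.2), and rewriting it as $L(x_p u_i) = x_p L u_i + [L, x_p] u_i$ yields (2.1).

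The calculation is routine and there is no substantive obstacle; the only point that merits a second's attention is that although the coefficients of $\partial_t$ in $X_j$ and $Y_j$ depend nontrivially on the spatial variables (through $y_j |z|^{2\sigma-2}$ and $x_j |z|^{2\sigma-2}$), they contribute nothing to $X_j x_p$ or $Y_j x_p$ because $\partial_t x_p = 0$. Thus the commutator computation reduces to the same simple form as in the Euclidean case, and the Greiner-type weight factor plays no role.
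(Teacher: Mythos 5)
Your proof is correct and is essentially the same computation as the paper's: the paper expands $L(x_p u_i)$ directly via the product rule (using $X_j x_p = \delta_{jp}$, $Y_j x_p = 0$), which is exactly your commutator identity $[X_j^2,x_p]=X_j[X_j,x_p]+[X_j,x_p]X_j$ written out on the function $x_p u_i$. The only cosmetic difference is order of presentation (you derive (2.2) first and deduce (2.1); the paper does the reverse), which is immaterial since the two identities are trivially equivalent.
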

\begin{proof}
A direct calculation gives\[{X_j}({x_p}{u_i}) = ({X_j}{x_p}){u_i} +
{x_p}({X_j}{u_i}),\]
\begin{equation}\begin{aligned} X_j^2({x_p}{u_i})&={X_j}(({X_j}{x_p}){u_i} +
{x_p}({X_j}{u_i}))
\\
&=2({X_j}{x_p})({X_j}{u_i}) +
{x_p}(X_j^2{u_i}).\end{aligned}\notag\end{equation} and
\[{Y_j}({x_p}{u_i}) = {x_p}({Y_j}{u_i}).\]\[Y_j^2({x_p}{u_i}) = {x_p}(Y_j^2{u_i}).\]
So \begin{equation}\begin{aligned}L({x_p}{u_i}) &=- \sum\limits_{j =
1}^n {(X_j^2 + Y_j^2)} ({x_p}{u_i})
\\
 &=- \sum\limits_{j = 1}^n {\left[ {2({X_j}{x_p})({X_j}{u_i}) + {x_p}(X_j^2{u_i}) + {x_p}(Y_j^2{u_i})} \right]}
\\
 &={x_p}L{u_i} - 2{X_p}{u_i},
\end{aligned}\notag\end{equation}
and (2.1) is proved. Noting \[\left[ {L,{x_p}} \right]{u_i} =
L({x_p}{u_i}) - {x_p}L{u_i} = {x_p}L{u_i} - 2{X_p}{u_i} -
{x_p}L{u_i} =  - 2{X_p}{u_i},\](2.2) is proved.
\end{proof}
\begin{lem}
(see [5]) Let $A$: $D \subset H \to H$ be a self-adjoint operator
defined on a dense domain $D$, which is semibounded below and has a
discrete spectrum ${\lambda _1} \le {\lambda _2} \le {\lambda _3}
\cdot  \cdot  \cdot $. Let $\left\{ {{T_p}:D \to H} \right\}_{p =
1}^n$ be a collection of skew-symmetric operators, and $\left\{
{{B_p}:{T_p}(D) \to H} \right\}_{p = 1}^n$ be a collection of
symmetric operators, leaving $D$ invariant. We denote by $\left\{
{{u_i}} \right\}_{i = 1}^n$ a basis of orthonormal eigenvectors of
$A$, ${u_i}$ corresponding to ${\lambda _i}$. Let $k \ge 1$ and
assume ${\lambda _{k + 1}} \ge {\lambda _k}$. Then for any $(f,g)$
in ${\chi _{{\lambda _{k + 1}}}}$, it follows
\begin{align}
&{\left( {\sum\limits_{i = 1}^k {\sum\limits_{p = 1}^n {f({\lambda
_i})\left\langle {\left[ {{T_p},{B_p}} \right]{u_i},{u_i}}
\right\rangle } } } \right)^2}
\\
&\le 4\left( {\sum\limits_{i = 1}^k {\sum\limits_{p = 1}^n
{g({\lambda _i})\left\langle {\left[ {A,{B_p}}
\right]{u_i},{B_p}{u_i}} \right\rangle } } } \right)\left(
{\sum\limits_{i = 1}^k {\sum\limits_{p = 1}^n {\frac{{{{(f({\lambda
_i}))}^2}}}{{g({\lambda _i})({\lambda _{k + 1}} - {\lambda
_i})}}{{\left\| {{T_p}{u_i}} \right\|}^2}} } } \right)\notag.
\end{align}
\end{lem}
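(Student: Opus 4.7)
The plan is to run a weighted Rayleigh--Ritz test-function argument, coupled with a Cauchy--Schwarz inequality whose weights are precisely those encoded in the pair $(f,g)\in\chi_{\lambda_{k+1}}$. The overall strategy follows the commutator approach of Harrell--Stubbe and Ashbaugh--Hermi, with the $\chi_\lambda$ class engineered so that a generalized Chebyshev-type inequality (Lemma 2.8) can close the estimate.

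For each $i=1,\dots,k$ and $p=1,\dots,n$, I would introduce the test vector
\[
\varphi_{i,p} := B_p u_i - \sum_{r=1}^k b_{ir}^p\, u_r, \qquad b_{ir}^p := \langle B_p u_i, u_r\rangle = b_{ri}^p,
\]
so that $\varphi_{i,p}$ is orthogonal to $\mathrm{span}\{u_1,\dots,u_k\}$. The variational characterization of $\lambda_{k+1}$ gives $\lambda_{k+1}\|\varphi_{i,p}\|^2 \le \langle A\varphi_{i,p},\varphi_{i,p}\rangle$; expanding via $Au_r=\lambda_r u_r$ and $AB_p u_i=[A,B_p]u_i+\lambda_i B_p u_i$ and using $\|\varphi_{i,p}\|^2=\|B_p u_i\|^2-\sum_r(b_{ir}^p)^2$, this rearranges to
\[
(\lambda_{k+1}-\lambda_i)\|\varphi_{i,p}\|^2 \le \langle [A,B_p]u_i, B_p u_i\rangle + \sum_{r=1}^k(\lambda_i-\lambda_r)(b_{ir}^p)^2.
\]
Multiplying by $g(\lambda_i)$, summing over $i,p$, and symmetrizing the cross sum under $i\leftrightarrow r$ (using $b_{ir}^p=b_{ri}^p$) converts it to $\tfrac12\sum(g(\lambda_i)-g(\lambda_r))(\lambda_i-\lambda_r)(b_{ir}^p)^2$, which is nonpositive by the monotonicity of $g$ established in Lemma 2.2. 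One thus obtains the spectral bound
\[
\sum_{i,p} g(\lambda_i)(\lambda_{k+1}-\lambda_i)\|\varphi_{i,p}\|^2 \le \sum_{i,p} g(\lambda_i)\langle [A,B_p]u_i, B_p u_i\rangle.
\]

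To bring the left-hand side of (2.3) into a form compatible with this bound, I would exploit the skew-symmetry of $T_p$ and the symmetry of $B_p$ to write $\langle [T_p,B_p]u_i,u_i\rangle=-2\langle T_p u_i, B_p u_i\rangle$, decompose $B_p u_i=\sum_r b_{ir}^p u_r+\varphi_{i,p}$, weight by $f(\lambda_i)$, and sum. Symmetrizing the diagonal piece under $i\leftrightarrow r$ (using $\langle T_p u_i, u_r\rangle=-\langle T_p u_r, u_i\rangle$) yields
\[
\sum_{i,p} f(\lambda_i)\langle [T_p,B_p]u_i,u_i\rangle = -2\sum_{i,p} f(\lambda_i)\langle T_p u_i,\varphi_{i,p}\rangle + \mathcal{R},
\]
with a remainder $\mathcal{R}$ built from the difference quotient $(f(\lambda_i)-f(\lambda_r))/(\lambda_i-\lambda_r)$ for $\lambda_i\ne\lambda_r$.

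The last step is a weighted Cauchy--Schwarz with the splitting $f(\lambda_i)=\frac{f(\lambda_i)}{\sqrt{g(\lambda_i)(\lambda_{k+1}-\lambda_i)}}\cdot\sqrt{g(\lambda_i)(\lambda_{k+1}-\lambda_i)}$ applied to the tail sum, combined with the spectral bound above to control the factor carrying $\|\varphi_{i,p}\|^2$; this produces the two factors on the right of (2.3) together with the constant $4$. The main obstacle is absorbing the remainder $\mathcal{R}$: here condition (ii) of Definition 2.1, applied pointwise at $(x,y)=(\lambda_i,\lambda_r)$, is exactly the algebraic identity that, once squared and summed in $(i,r,p)$, forces the cross contributions of $\mathcal{R}$ to be cancelled by pieces of the Cauchy--Schwarz bound. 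In other words, the class $\chi_{\lambda_{k+1}}$ is tailored so that the generalized Chebyshev-type inequality of Lemma 2.8 closes the estimate to (2.3).
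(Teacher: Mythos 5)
The paper does not prove this lemma at all: it is quoted verbatim from reference [5] (Ilias--Makhoul), so there is no in-paper proof to compare against. Your sketch reconstructs the standard Harrell--Stubbe/Ashbaugh--Hermi-style argument that [5] actually uses, and its skeleton is sound: the trial functions $\varphi_{i,p}=B_pu_i-\sum_r b_{ir}^p u_r$, the variational bound $(\lambda_{k+1}-\lambda_i)\|\varphi_{i,p}\|^2\le\langle[A,B_p]u_i,B_pu_i\rangle+\sum_r(\lambda_i-\lambda_r)(b_{ir}^p)^2$, the symmetrization of the cross term into $\tfrac12\sum(g(\lambda_i)-g(\lambda_r))(\lambda_i-\lambda_r)(b_{ir}^p)^2\le 0$ via the monotonicity of $g$ (Lemma 2.2), the identity $\langle[T_p,B_p]u_i,u_i\rangle=-2\langle T_pu_i,B_pu_i\rangle$, and the weighted Cauchy--Schwarz with weight $f/\sqrt{g\cdot(\lambda_{k+1}-\cdot)}$ are all the right ingredients, and condition (ii) of Definition 2.1 is indeed exactly what lets the remainder $\mathcal{R}$ be absorbed into the nonpositive symmetrized term.

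Two caveats. First, your closing sentence mis-attributes the final absorption to Lemma 2.8: the generalized Chebyshev inequality plays no role in proving (2.3); it is used only later (e.g., in deriving Corollary 1.7 from Theorem 1.5). What closes the estimate is precisely Definition 2.1(ii) evaluated at $(x,y)=(\lambda_i,\lambda_r)$, as you correctly state one sentence earlier, so this is an internal inconsistency rather than a gap in the argument. Second, the last step is only described, not executed: to actually produce the constant $4$ and the two factors you must carry out the Young-type splitting with a free parameter on both the $\langle T_pu_i,\varphi_{i,p}\rangle$ term (using $|\langle T_pu_i,\varphi_{i,p}\rangle|\le\|PT_pu_i\|\,\|\varphi_{i,p}\|$ with $\|PT_pu_i\|^2=\|T_pu_i\|^2-\sum_r(t_{ir}^p)^2$) and on $\mathcal{R}$, check that the $(t_{ir}^p)^2$ corrections recombine into $\sum\|T_pu_i\|^2$, and then optimize the parameter. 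That computation is standard but is the part where the bookkeeping can go wrong, and your proposal leaves it implicit.
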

\begin{lem}
For $\gamma  \ge 1$, ${s_i} \ge 0,i = 1, \cdot  \cdot  \cdot ,k$, we
have\[{\left( {\sum\limits_{i = 1}^k {{s_i}} } \right)^\gamma } \le
{k^{\gamma  - 1}}\sum\limits_{i = 1}^k {s_i^\gamma }. \]
\end{lem}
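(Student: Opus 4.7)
The statement is the classical power-mean / Jensen inequality for the convex map $t\mapsto t^\gamma$ on $[0,\infty)$, so the plan is to reduce it to a single use of convexity (or, equivalently, to one application of Hölder). I would not expect any real obstacle here; the main thing is simply choosing the cleanest route so the lemma can be cited as a black box in Sections 3 and 4.

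The primary plan is to invoke Jensen's inequality. Since $\gamma\ge 1$, the function $\varphi(t)=t^\gamma$ is convex on $[0,\infty)$. Applying Jensen to the uniform probability measure on $\{s_1,\dots,s_k\}$ gives
\[
\varphi\!\left(\frac{1}{k}\sum_{i=1}^k s_i\right)\le \frac{1}{k}\sum_{i=1}^k \varphi(s_i),
\]
i.e.\ $\bigl(\tfrac1k\sum_i s_i\bigr)^\gamma\le \tfrac1k\sum_i s_i^\gamma$. Multiplying through by $k^\gamma$ yields the desired inequality. The nonnegativity hypothesis $s_i\ge 0$ is needed only to ensure that $s_i^\gamma$ is defined and that $\varphi$ is convex on an interval containing all the $s_i$.

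As a backup route (useful if the authors prefer to avoid quoting Jensen), I would present the same bound via Hölder's inequality with conjugate exponents $\gamma$ and $\gamma/(\gamma-1)$:
\[
\sum_{i=1}^k s_i=\sum_{i=1}^k 1\cdot s_i\le \Bigl(\sum_{i=1}^k 1\Bigr)^{1-1/\gamma}\Bigl(\sum_{i=1}^k s_i^\gamma\Bigr)^{1/\gamma}=k^{1-1/\gamma}\Bigl(\sum_{i=1}^k s_i^\gamma\Bigr)^{1/\gamma},
\]
and then raise both sides to the $\gamma$-th power. The boundary case $\gamma=1$ is trivial (both sides equal $\sum_i s_i$), so nothing special needs to be said there.

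Since the inequality is entirely elementary, there is no genuine obstacle; the only judgment call is stylistic. I would go with the Jensen version because it is a one-line argument and because convexity of $t^\gamma$ for $\gamma\ge 1$ is the kind of fact the reader will accept without comment, keeping Section 2 short so that the technical core of the paper (the spectral inequalities in Sections 3 and 4) is reached quickly.
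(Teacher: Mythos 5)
Your primary argument (convexity of $t\mapsto t^\gamma$ for $\gamma\ge 1$ plus Jensen applied to the uniform average, then multiplying by $k^\gamma$) is exactly the proof given in the paper, which verifies convexity via $\theta''(s)=\gamma(\gamma-1)s^{\gamma-2}\ge 0$ and concludes the same way. The proposal is correct and takes essentially the same approach.
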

\begin{proof}
Let $\theta (s) = {s^\gamma },s \ge 0,\gamma  \ge 1$, so $\theta
'(s) = \gamma {s^{\gamma  - 1}} \ge 0,\theta ''(s) = \gamma (\gamma
- 1){s^{\gamma  - 2}} \ge 0$. Noting that $\theta (s)$ is a convex
function on $(0, + \infty )$, we have that for ${s_i} > 0,i = 1,
\cdot  \cdot \cdot ,k$, it holds
\[\theta \left( {{\raise0.7ex\hbox{${\sum\limits_{i = 1}^k {{s_i}} }$} \!\mathord{\left/
 {\vphantom {{\sum\limits_{i = 1}^k {{s_i}} } k}}\right.\kern-\nulldelimiterspace}
\!\lower0.7ex\hbox{$k$}}} \right) \le \frac{{\sum\limits_{i = 1}^k
{\left( {\theta ({s_i})} \right)} }}{k},\] and yields\[{\left(
{\frac{{\sum\limits_{i = 1}^k {{s_i}} }}{k}} \right)^\gamma } \le
\frac{{\sum\limits_{i = 1}^k {s_i^\gamma } }}{k}.\] The required
inequality is proved.
\end{proof}
\begin{lem}
(Chebyshev's inequality, [6]) If $\left( {{a_k} - {a_j}}
\right)\left( {{b_k} - {b_j}} \right) \le 0$ for any nonnegative
$k$, $g$, then
\[\sum\limits_{i = 1}^n {{a_i}{b_i}}  \le \frac{1}{n}\left( {\sum\limits_{i = 1}^n {{a_i}} } \right)\left( {\sum\limits_{i = 1}^n {{b_i}} } \right).\]
\end{lem}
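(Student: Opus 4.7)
The plan is to prove this by the standard symmetrization trick: form the symmetric double sum
\[
S \;=\; \sum_{j=1}^{n}\sum_{k=1}^{n} \bigl(a_k - a_j\bigr)\bigl(b_k - b_j\bigr),
\]
which, by the hypothesis $(a_k - a_j)(b_k - b_j) \le 0$ for every pair of indices, satisfies $S \le 0$ termwise. The whole proof then reduces to computing $S$ in two ways and comparing.

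To compute $S$, I would expand the product as
\[
(a_k - a_j)(b_k - b_j) \;=\; a_k b_k - a_k b_j - a_j b_k + a_j b_j,
\]
then sum over $j$ and $k$ separately. The ``diagonal'' contributions $a_k b_k$ and $a_j b_j$ each produce $n\sum_i a_i b_i$ after summing over the free index, giving a total of $2n \sum_i a_i b_i$. The cross terms $-a_k b_j$ and $-a_j b_k$ each decouple into a product of one-index sums, contributing $-2\bigl(\sum_i a_i\bigr)\bigl(\sum_i b_i\bigr)$. Thus
\[
S \;=\; 2n\sum_{i=1}^{n} a_i b_i \;-\; 2\left(\sum_{i=1}^{n} a_i\right)\left(\sum_{i=1}^{n} b_i\right).
\]
Combining with $S \le 0$ and dividing by $2n$ yields the claimed inequality.

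There is essentially no obstacle to overcome: this is the classical Chebyshev sum inequality and the argument is a one-paragraph computation once the symmetric double sum is introduced. The only subtlety worth flagging is interpretive rather than technical — the hypothesis $(a_k - a_j)(b_k - b_j) \le 0$ for all pairs $k,j$ is equivalent to saying the sequences $\{a_i\}$ and $\{b_i\}$ are oppositely ordered, and the proof does not actually require the ``nonnegative'' assumption on the entries mentioned in the statement (it is valid for arbitrary real sequences with the stated pairwise sign condition). The nonnegativity is only needed downstream when this inequality is used together with other bounds in the eigenvalue estimates of Sections~3 and~4.
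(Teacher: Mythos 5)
Your symmetrization argument is correct: expanding $S=\sum_{j,k}(a_k-a_j)(b_k-b_j)$ does give $2n\sum_i a_ib_i-2\bigl(\sum_i a_i\bigr)\bigl(\sum_i b_i\bigr)$, and the termwise hypothesis forces $S\le 0$, which is exactly the claimed inequality after dividing by $2n$. Note, however, that the paper does not prove this lemma at all --- it is quoted from Kuang's \emph{Applied Inequalities} [6] with no argument supplied --- so there is no internal proof to compare against; your one-paragraph computation is the standard proof of the Chebyshev sum inequality and fills that gap cleanly. Your side remarks are also accurate: the hypothesis as printed contains a typo (``for any nonnegative $k$, $g$'' should read ``for any indices $k$, $j$''), the pairwise condition $(a_k-a_j)(b_k-b_j)\le 0$ is just the statement that the two sequences are oppositely ordered, and nonnegativity of the entries plays no role in this lemma itself --- it only matters where the lemma is invoked in Sections 3 and 4 (e.g.\ in the proof of Corollary 1.3, where it is applied with $a_i=(\lambda_{k+1}-\lambda_i)^{\alpha-1}$ and $b_i=\lambda_i$).
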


A key preliminary inequality in the paper is the following which
enable us to obtain estimates of eigenvalues more general than
Yang's.
\begin{lem}
If ${A_1} \ge {A_2} \ge  \cdot  \cdot  \cdot  \ge {A_k} \ge 0,$ $0
\le {B_1} \le {B_2} \le  \cdot  \cdot  \cdot  \le {B_k},$ $0 \le
{C_1} \le {C_2}$$\le  \cdot  \cdot  \cdot $$ \le {C_k},$ $i = 1,
\cdot  \cdot  \cdot ,k$, then for ${\alpha ^2} \le 2\beta $, we
have\begin{equation}\sum\limits_{i = 1}^k {A_i^\beta {B_i}}
\sum\limits_{i = 1}^k {A_i^{2\alpha  - \beta  - 1}{C_i}}  \le
\sum\limits_{i = 1}^k {A_i^\beta } \sum\limits_{i = 1}^k
{A_i^{2\alpha  - \beta  - 1}{B_i}{C_i}}. \end{equation}
\end{lem}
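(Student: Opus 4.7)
The plan is to prove the inequality by rewriting its difference as a double sum and then symmetrizing. Concretely, let $D$ denote the right-hand side minus the left-hand side, namely
\[
D = \sum_{i=1}^{k} A_i^{\beta} \sum_{j=1}^{k} A_j^{2\alpha-\beta-1} B_j C_j - \sum_{i=1}^{k} A_i^{\beta} B_i \sum_{j=1}^{k} A_j^{2\alpha-\beta-1} C_j = \sum_{i,j=1}^{k} A_i^{\beta} A_j^{2\alpha-\beta-1} C_j (B_j - B_i).
\]
Swapping the dummy indices and averaging yields
\[
D = \tfrac{1}{2}\sum_{i,j=1}^{k} (B_j - B_i)\bigl[A_i^{\beta} A_j^{2\alpha-\beta-1} C_j - A_j^{\beta} A_i^{2\alpha-\beta-1} C_i\bigr].
\]
So the goal reduces to showing that each summand is nonnegative; by the antisymmetry under $i \leftrightarrow j$, it suffices to check this for $i<j$.

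For $i<j$ we have $B_j \ge B_i \ge 0$, so we want
\[
A_i^{\beta} A_j^{2\alpha-\beta-1} C_j \ge A_j^{\beta} A_i^{2\alpha-\beta-1} C_i.
\]
Since $C_j \ge C_i \ge 0$, it is enough (after handling the trivial cases $A_j=0$ or $C_i=0$ separately) to establish $A_i^{\beta}A_j^{2\alpha-\beta-1} \ge A_j^{\beta}A_i^{2\alpha-\beta-1}$ when $A_i \ge A_j > 0$. This is equivalent to $A_i^{2\beta-2\alpha+1} \ge A_j^{2\beta-2\alpha+1}$, which by monotonicity comes down to the single inequality $2\beta - 2\alpha + 1 \ge 0$.

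The main step—and the place where the hypothesis $\alpha^2 \le 2\beta$ is used—is the identity
\[
2\beta - 2\alpha + 1 = (2\beta - \alpha^{2}) + (\alpha-1)^{2},
\]
in which both summands are nonnegative, the first by assumption and the second trivially. I expect this algebraic observation to be the only non-routine piece of the argument; everything else is bookkeeping with the monotonicity of the sequences $\{A_i\}$, $\{B_i\}$, $\{C_i\}$ and a careful treatment of the edge cases where some $A_i$ vanish or the exponent $2\alpha-\beta-1$ is negative. Once $2\beta-2\alpha+1\ge 0$ is in hand, every pair term is nonnegative, giving $D \ge 0$ and hence (2.4).
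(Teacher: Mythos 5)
Your proof is correct, but it is organized differently from the paper's. You prove the inequality by writing the difference of the two sides as a double sum, symmetrizing over $i\leftrightarrow j$, and checking that each pair term
\[
(B_j-B_i)\bigl[A_i^{\beta}A_j^{2\alpha-\beta-1}C_j-A_j^{\beta}A_i^{2\alpha-\beta-1}C_i\bigr]
\]
is nonnegative --- the classical Chebyshev-sum-inequality argument. The paper instead proceeds by induction on $k$: it peels off the $k$-th term, invokes the inductive hypothesis on the first $k-1$ indices, and is left with exactly the same pairwise quantity, but only for pairs $(i,k)$ with $i<k$. Both arguments ultimately rest on the identical key facts: the factorization $A_i^{\beta}A_j^{2\alpha-\beta-1}-A_j^{\beta}A_i^{2\alpha-\beta-1}=A_i^{2\alpha-\beta-1}A_j^{2\alpha-\beta-1}\bigl(A_i^{2\beta-2\alpha+1}-A_j^{2\beta-2\alpha+1}\bigr)$ and the observation $2\beta-2\alpha+1=(2\beta-\alpha^2)+(\alpha-1)^2\ge 0$, which the paper phrases as ``$\alpha^2\le 2\beta$ and $2\alpha-1\le\alpha^2$.'' Your symmetrization buys a shorter, induction-free argument that treats all pairs at once; the paper's induction needs the pairwise inequality only against the last index but requires more bookkeeping. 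Both treatments are equally loose about the degenerate situation where some $A_i=0$ while an exponent such as $2\alpha-\beta-1$ is negative (the paper simply asserts the nonnegative case ``is also true''), so your flagged edge cases are no worse handled than in the original.
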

\begin{proof}
When $k = 1$, we see that (2.4) is true, since $A_1^\beta
{B_1}A_1^{2\alpha  - \beta  - 1}{C_1} - A_1^\beta A_1^{2\alpha  -
\beta  - 1}{B_1}{C_1} = 0$. Now suppose that the conclusion is true
for $k - 1$, then\[\sum\limits_{i = 1}^k {A_i^\beta {B_i}}
\sum\limits_{i = 1}^k {A_i^{2\alpha  - \beta  - 1}{C_i}}  -
\sum\limits_{i = 1}^k {A_i^\beta } \sum\limits_{i = 1}^k
{A_i^{2\alpha  - \beta  - 1}{B_i}{C_i}} \]\[\begin{array}{l}
 = \sum\limits_{i = 1}^{k - 1} {A_i^\beta {B_i}} \sum\limits_{i = 1}^{k - 1} {A_i^{2\alpha  - \beta  - 1}{C_i}}  - \sum\limits_{i = 1}^{k - 1} {A_i^\beta } \sum\limits_{i = 1}^{k - 1} {A_i^{2\alpha  - \beta  - 1}{B_i}{C_i}} \\
\quad  + A_k^{2\alpha  - 1}{B_k}{C_k} - A_k^{2\alpha  - 1}{B_k}{C_k}\\
\quad  + A_k^{2\alpha  - \beta  - 1}{C_k}\sum\limits_{i = 1}^{k - 1} {A_i^\beta {B_i}}  + A_k^\beta {B_k}\sum\limits_{i = 1}^{k - 1} {A_i^{2\alpha  - \beta  - 1}{C_i}} \\
\quad  - A_k^{2\alpha  - \beta  - 1}{B_k}{C_k}\sum\limits_{i = 1}^{k
- 1} {A_i^\beta }  - A_k^\beta \sum\limits_{i = 1}^{k - 1}
{A_i^{2\alpha  - \beta  - 1}{B_i}{C_i}} .
\end{array}\]
Based on the assumption for $k-1$, we have
\begin{align}
&\sum\limits_{i = 1}^k {A_i^\beta {B_i}} \sum\limits_{i = 1}^k
{A_i^{2\alpha  - \beta  - 1}{C_i}}  - \sum\limits_{i = 1}^k
{A_i^\beta } \sum\limits_{i = 1}^k {A_i^{2\alpha  - \beta  -
1}{B_i}{C_i}}
\\
& \le A_k^{2\alpha  - \beta  - 1}{C_k}\sum\limits_{i = 1}^{k - 1}
{A_i^\beta \left( {{B_i} - {B_k}} \right)}  - A_k^\beta
\sum\limits_{i = 1}^{k - 1} {A_i^{2\alpha  - \beta  - 1}{C_i}\left(
{{B_i} - {B_k}} \right)}\notag
\\
&= A_k^{2\alpha  - \beta  - 1}\sum\limits_{i = 1}^{k - 1}
{A_i^{2\alpha  - \beta  - 1}\left( {A_i^{2\beta  - 2\alpha  +
1}{C_k} - A_k^{2\beta  - 2\alpha  + 1}{C_i}} \right)\left( {{B_i} -
{B_k}} \right)} .\notag
\end{align} Noting ${\alpha ^2} \le 2\beta $
and $2\alpha  - 1 \le {\alpha ^2}$ implies $2\beta  - 2\alpha  + 1
\ge 0$.

If ${A_1} \ge {A_2} \ge  \cdot  \cdot  \cdot  \ge {A_k} > 0$, $0 <
{B_1} \le {B_2} \le  \cdot  \cdot  \cdot  \le {B_k}$, $0 < {C_1} \le
{C_2} \le  \cdot  \cdot  \cdot  \le {C_k}$, then for $i = 1, \cdot
\cdot \cdot ,k$,
\[\frac{{A_i^{2\beta  - 2\alpha  +
1}{C_k}}}{{A_k^{2\beta  - 2\alpha  + 1}{C_i}}} = \left(
{\frac{{{C_k}}}{{{C_i}}}} \right){\left( {\frac{{{A_i}}}{{{A_k}}}}
\right)^{2\beta  - 2\alpha  + 1}} \ge 1, \frac{{{B_i}}}{{{B_k}}} \le
1.\] and \begin{equation}A_i^{2\beta  - 2\alpha  + 1}{C_k} -
A_k^{2\beta - 2\alpha  + 1}{C_i} \ge 0, {\kern 1pt} {B_i} - {B_k}
\le 0.
\end{equation}
If ${A_i}, {B_i}$ and ${C_i},i = 1, \cdot \cdot
\cdot ,k$, are nonnegative, then (2.6) is also true. Hence from
(2.6),
\[\sum\limits_{i = 1}^k {A_i^\beta {B_i}} \sum\limits_{i = 1}^k {A_i^{2\alpha  - \beta  - 1}{C_i}}  - \sum\limits_{i = 1}^k {A_i^\beta } \sum\limits_{i = 1}^k {A_i^{2\alpha  - \beta  - 1}{B_i}{C_i}}  \le 0,\]
and (2.4) is proved.
\end{proof}

By Lemma 2.8, we immediately have the following result proved in
[1].
\begin{cor}
If ${A_1} \ge {A_2} \ge  \cdot \cdot  \cdot  \ge {A_k} \ge 0$, $0
\le {B_1} \le {B_2} \le  \cdot \cdot  \cdot  \le {B_k}$, $0 \le
{C_1} \le {C_2} \le  \cdot  \cdot \cdot  \le {C_k}$, $i = 1, \cdot
\cdot  \cdot ,k$, then we have\[\sum\limits_{i = 1}^n {A_i^2{B_i}}
\sum\limits_{i = 1}^n {{A_i}{C_i}}  \le \sum\limits_{i = 1}^n
{A_i^2} \sum\limits_{i = 1}^n {{A_i}{B_i}{C_i}}. \]
\end{cor}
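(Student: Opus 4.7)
The plan is to derive this corollary as a direct specialization of Lemma 2.8. Lemma 2.8 asserts, under the same monotonicity hypotheses on $\{A_i\},\{B_i\},\{C_i\}$, that
\[\sum_{i=1}^{k} A_i^{\beta} B_i \,\sum_{i=1}^{k} A_i^{2\alpha-\beta-1} C_i \;\le\; \sum_{i=1}^{k} A_i^{\beta} \,\sum_{i=1}^{k} A_i^{2\alpha-\beta-1} B_i C_i\]
whenever $\alpha^{2} \le 2\beta$. Matching this template against the target inequality, we need $A_i^{\beta} = A_i^{2}$ and $A_i^{2\alpha-\beta-1} = A_i$, which forces the choice $\beta = 2$ and $2\alpha - \beta - 1 = 1$, hence $\alpha = 2$.

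The only nontrivial point is to check that this pair $(\alpha,\beta) = (2,2)$ is admissible for Lemma 2.8, i.e.\ that $\alpha^{2} \le 2\beta$. Here $\alpha^{2} = 4$ and $2\beta = 4$, so the constraint is satisfied with equality. Substituting these values into the conclusion of Lemma 2.8 gives precisely
\[\sum_{i=1}^{k} A_i^{2} B_i \,\sum_{i=1}^{k} A_i C_i \;\le\; \sum_{i=1}^{k} A_i^{2} \,\sum_{i=1}^{k} A_i B_i C_i,\]
which is the assertion of the corollary (the indexing variable $n$ in the statement being a typographical substitute for $k$).

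There is no real obstacle: the proof is a one-line specialization. The only thing worth remarking on is that the corollary sits exactly on the boundary $\alpha^{2} = 2\beta$ of the admissible region, which explains why the result recovered from [1] is the sharpest case of the Chebyshev-type family that Lemma 2.8 produces, and why Lemma 2.8 is genuinely a generalization rather than a mere reformulation.
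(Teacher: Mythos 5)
Your proof is correct and takes exactly the route the paper intends: the paper states that the corollary follows ``immediately'' from Lemma 2.8, and your choice $\alpha=\beta=2$ (with the boundary check $\alpha^{2}=4=2\beta$) is precisely that specialization, made explicit. Your observation about the summation index $n$ being a typo for $k$ is also right.
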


Now let us describe some characteristic and noncharacteristic
domains with respect to vector fields and give some such domains.

\begin{definition}
Let $\phi (z,t)$ be the boundary function of a domain $\Omega $. We
call that a point $(z,t)$ on $\partial \Omega $ is a characteristic
point with respect to vector fields ${X_j},{Y_j}$ $(j = 1, \cdot
\cdot \cdot ,n)$, if it satisfies $\left| {{\nabla _L}\phi (z,t)}
\right| = 0$, where ${\nabla _L} = ({X_1}, \cdots .{X_n},{Y_1},
\cdots ,{Y_n})$. A domain with characteristic points is called a
characteristic domain. If the boundary $\partial \Omega $ does not
have any characteristic point, then $\Omega $ is said a
noncharacteristic domain.
\end{definition}
\begin{prop}
The sets ${\Omega _m} = \left\{ {(z,t) \in {C^{2n}} \times R\left|
{{{\left( {\left| z \right| - a} \right)}^2} + {{\left( {t - b}
\right)}^2} < {m^2}} \right.} \right\},$ $m = 1,2, \cdot  \cdot
\cdot ,$ are noncharacteristic domains with respect to
${X_j},{Y_j}(j = 1, \cdot  \cdot  \cdot ,n)$, where $a > 0$, $b$ is
any real number.
\end{prop}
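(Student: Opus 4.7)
The plan is to write the boundary of $\Omega_m$ as the zero set of the defining function
\[
\phi(z,t) = (|z|-a)^2 + (t-b)^2 - m^2,
\]
and then compute $|\nabla_L\phi|^2 = \sum_{j=1}^n \bigl((X_j\phi)^2+(Y_j\phi)^2\bigr)$ explicitly on $\partial\Omega_m$, showing that it cannot vanish.

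First I would record the ordinary partial derivatives, using $\partial|z|/\partial x_j = x_j/|z|$ and $\partial|z|/\partial y_j = y_j/|z|$:
\[
\phi_{x_j}=\tfrac{2(|z|-a)x_j}{|z|},\quad \phi_{y_j}=\tfrac{2(|z|-a)y_j}{|z|},\quad \phi_t=2(t-b).
\]
Plugging into $X_j=\partial_{x_j}+2\sigma y_j|z|^{2\sigma-2}\partial_t$ and $Y_j=\partial_{y_j}-2\sigma x_j|z|^{2\sigma-2}\partial_t$ gives
\[
X_j\phi = \tfrac{2(|z|-a)x_j}{|z|}+4\sigma y_j|z|^{2\sigma-2}(t-b),\quad Y_j\phi = \tfrac{2(|z|-a)y_j}{|z|}-4\sigma x_j|z|^{2\sigma-2}(t-b).
\]

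Next I would square and sum. The two cross terms $\pm\,16\sigma(|z|-a)x_jy_j|z|^{2\sigma-3}(t-b)$ cancel inside each $j$, and the remaining coefficients carry factors $x_j^2+y_j^2$, so summing over $j$ and using $\sum_{j=1}^n(x_j^2+y_j^2)=|z|^2$ collapses everything to
\[
|\nabla_L\phi|^2 \;=\; 4(|z|-a)^2+16\sigma^2|z|^{4\sigma-2}(t-b)^2.
\]

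Finally, I would argue the noncharacteristic property from this identity. Suppose $(z,t)\in\partial\Omega_m$ is a characteristic point. Then both nonnegative summands on the right vanish, which forces $|z|=a$ and $\sigma^2|z|^{4\sigma-2}(t-b)^2=0$. Because $a>0$ we have $|z|^{4\sigma-2}=a^{4\sigma-2}>0$, and $\sigma\ge 1$ is a natural number, so necessarily $t=b$. But then $\phi(z,t)=(a-a)^2+(b-b)^2-m^2=-m^2<0$, contradicting $(z,t)\in\partial\Omega_m$. Hence $|\nabla_L\phi|\neq 0$ everywhere on $\partial\Omega_m$, and $\Omega_m$ is noncharacteristic.

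There is no real conceptual obstacle; the only thing to watch is the clean cancellation of the cross terms and the use of the hypothesis $a>0$ (together with $\sigma\ge 1$) to ensure $|z|^{4\sigma-2}\neq 0$ at the candidate characteristic point, so that the second summand does force $t=b$.
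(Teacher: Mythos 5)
Your proposal is correct and follows essentially the same route as the paper: both introduce the defining function $\phi(z,t)=(|z|-a)^2+(t-b)^2-m^2$, compute $X_j\phi$ and $Y_j\phi$, observe the cancellation of cross terms to get $|\nabla_L\phi|^2=4(|z|-a)^2+16\sigma^2|z|^{4\sigma-2}(t-b)^2$, and conclude that vanishing forces $|z|=a$, $t=b$, a point not on $\partial\Omega_m$. Your remark that $a>0$ is needed so that the second summand genuinely forces $t=b$ is a small but welcome clarification of a step the paper leaves implicit.
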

\begin{proof}
Fix $m$ and denote $\psi (z,t) = {\left( {\left| z \right| - a}
\right)^2} + {\left( {t - b} \right)^2} - {m^2}$,
then\begin{equation}\begin{aligned} {X_j}\psi (z,t) &=
\frac{\partial }{{\partial {x_j}}}\left( {{{\left( {\left| z \right|
- a} \right)}^2} + {{\left( {t - b} \right)}^2} - {m^2}} \right) +
2\sigma {y_j}{\left| z \right|^{2\sigma  - 2}}\frac{\partial
}{{\partial t}}\left( {{{\left( {\left| z \right| - a} \right)}^2} +
{{\left( {t - b} \right)}^2} - {m^2}} \right)
\\
&= \frac{{2\left( {\left| z \right| - a} \right){x_j}}}{{\left| z
\right|}} + 4\sigma {y_j}{\left| z \right|^{2\sigma  - 2}}\left( {t
- b} \right),
\end{aligned}\notag\end{equation}
\begin{equation}\begin{aligned} {Y_j}\psi (z,t) &= \frac{\partial }{{\partial {y_j}}}\left( {{{\left( {\left| z \right| - a} \right)}^2} + {{\left( {t - b} \right)}^2} - {m^2}} \right) - 2\sigma {x_j}{\left| z \right|^{2\sigma  - 2}}\frac{\partial }{{\partial t}}\left( {{{\left( {\left| z \right| - a} \right)}^2} + {{\left( {t - b} \right)}^2} - {m^2}} \right)
\\
&= \frac{{2\left( {\left| z \right| - a} \right){y_j}}}{{\left| z
\right|}} - 4\sigma {x_j}{\left| z \right|^{2\sigma  - 2}}\left( {t
- b} \right)
\end{aligned}\notag\end{equation}
and
\begin{equation}\begin{aligned} {\left| {{\nabla _L}\psi (z,t)} \right|^2}
&= \sum\limits_{j = 1}^n {\left( {{{\left| {{X_j}\psi (z,t} \right|}^2} + {{\left| {{Y_j}\psi (z,t)} \right|}^2}} \right)}
\\
&= \sum\limits_{j = 1}^n {\left( {\frac{{4{{\left( {\left| z \right|
- a} \right)}^2}\left( {x_j^2 + y_j^2} \right)}}{{{{\left| z
\right|}^2}}} + 16{\sigma ^2}{{\left| z \right|}^{4\sigma  -
4}}{{\left( {t - b} \right)}^2}\left( {x_j^2 + y_j^2} \right)}
\right)} \\
& = 4{\left( {\left| z \right| - a} \right)^2} + 16{\sigma
^2}{\left| z \right|^{4\sigma  - 2}}{\left( {t - b} \right)^2}.
\end{aligned}\notag\end{equation}
If $\left| {{\nabla _L}\psi (z,t)} \right| = 0$, then $\left| z
\right| = a,t = b$. But points satisfying these conditions do not be
on the boundary $\partial {\Omega _m},m = 1,2, \cdot  \cdot  \cdot
$, so $\Omega _m,m = 1,2, \cdot  \cdot  \cdot $, are
noncharacteristic.
\end{proof}

If we take $a = 2,b = 0,$ then (see [2] for the case of Heisenberg
groups)
\begin{cor}
The sets ${\Omega _m} = \left\{ {(z,t) \in {C^{2n}} \times R\left|
{{{\left( {\left| z \right| - 2} \right)}^2} + {t^2} < {m^2}}
\right.} \right\}$, $m = 1,2, \cdots ,$ are noncharacteristic
domains with respect to vector fields ${X_j},{Y_j}(j = 1, \cdot
\cdot \cdot ,n)$. \end{cor}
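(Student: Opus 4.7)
The plan is to obtain Corollary 2.12 as a direct specialization of Proposition 2.11. Since Proposition 2.11 has already been established for every $a>0$ and every $b\in R$, the only thing to check is that the particular choice $a=2$, $b=0$ satisfies the hypotheses of that proposition; this is immediate because $2>0$.

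More concretely, I would simply set $a=2$ and $b=0$ in the definition of $\Omega_m$, observe that this matches the domain in the statement of Corollary 2.12, and then quote Proposition 2.11 to conclude that each such $\Omega_m$ is noncharacteristic with respect to the vector fields $X_j,Y_j$. No further computation with $\psi$ or $|\nabla_L\psi|$ is required, since the necessary gradient identity
\[
|\nabla_L\psi(z,t)|^2 = 4(|z|-a)^2 + 16\sigma^2|z|^{4\sigma-2}(t-b)^2
\]
and the argument that $|\nabla_L\psi|=0$ forces $|z|=a$, $t=b$ (which cannot lie on the boundary) were already carried out in the proof of Proposition 2.11.

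There is essentially no obstacle here: the corollary is a textbook instance of a more general result already proved. The only thing I would add, for transparency, is a parenthetical remark recording the specialization ($a=2$, $b=0$) and noting that this recovers the domain considered in [2] in the Heisenberg case ($\sigma=1$), which is precisely the motivation the authors give in the sentence preceding the corollary.
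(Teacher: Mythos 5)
Your proposal is correct and matches the paper exactly: the authors state this corollary as the immediate specialization $a=2$, $b=0$ of Proposition 2.11 (prefaced only by the sentence ``If we take $a=2$, $b=0$, then\dots''), with no additional computation. Nothing further is needed.
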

\begin{prop}
The set $\Omega  = \left\{ {(z,t) \in {C^{2n}} \times R\left|
{\left| z \right|^{4\sigma }} + {t^2} < 1\right.} \right\}$ is a
characteristic domain with respect to vector fields ${X_j},{Y_j}(j =
1, \cdot \cdot \cdot ,n)$.
\end{prop}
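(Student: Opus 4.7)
The plan is to exhibit explicit characteristic points on $\partial\Omega$ by directly computing $|\nabla_L \phi|$ for the defining function $\phi(z,t) = |z|^{4\sigma} + t^2 - 1$ and showing it vanishes somewhere on the set $\{\phi=0\}$. The setup mirrors the calculation in the proof of Proposition 2.11, so the structure carries over; the difference is that the specific form of $\phi$ here forces the horizontal gradient to collapse at a pair of points.

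First I would compute the Euclidean partial derivatives of $\phi$: $\partial_{x_j}\phi = 4\sigma|z|^{4\sigma-2}x_j$, $\partial_{y_j}\phi = 4\sigma|z|^{4\sigma-2}y_j$, and $\partial_t \phi = 2t$. Plugging these into the definitions of $X_j$ and $Y_j$, I would obtain the factorizations
\[
X_j \phi = 4\sigma |z|^{2\sigma-2}\bigl(|z|^{2\sigma} x_j + t\, y_j\bigr), \qquad Y_j \phi = 4\sigma |z|^{2\sigma-2}\bigl(|z|^{2\sigma} y_j - t\, x_j\bigr).
\]

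Next I would square, add, and use the identity $(|z|^{2\sigma}x_j + ty_j)^2 + (|z|^{2\sigma}y_j - tx_j)^2 = (x_j^2+y_j^2)(|z|^{4\sigma}+t^2)$ (the cross terms cancel). Summing in $j$ and using $\sum_j (x_j^2+y_j^2)=|z|^2$ yields
\[
|\nabla_L \phi(z,t)|^2 = 16\sigma^2 |z|^{4\sigma-2}\bigl(|z|^{4\sigma}+t^2\bigr).
\]
On the boundary we have $|z|^{4\sigma}+t^2=1$, so $|\nabla_L \phi|^2 = 16\sigma^2 |z|^{4\sigma-2}$, which vanishes precisely when $|z|=0$. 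Combined with the boundary relation this forces $t=\pm 1$, so the two points $(0,\pm 1)\in\partial\Omega$ are characteristic, establishing that $\Omega$ is a characteristic domain.

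There is essentially no conceptual obstacle; the only delicate point is the algebraic cancellation that makes the cross terms $2|z|^{2\sigma}tx_jy_j$ disappear after adding $(X_j\phi)^2+(Y_j\phi)^2$, which is what allows the clean factorization through $|z|^{4\sigma}+t^2$ and thus reveals the two poles as the characteristic set.
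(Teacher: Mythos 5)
Your proposal is correct and follows essentially the same route as the paper: compute $X_j\varphi$ and $Y_j\varphi$ for $\varphi=|z|^{4\sigma}+t^2-1$, sum the squares to get $|\nabla_L\varphi|^2=16\sigma^2|z|^{4\sigma-2}\left(|z|^{4\sigma}+t^2\right)$ (the paper writes this unfactored as $16\sigma^2|z|^{8\sigma-2}+16\sigma^2|z|^{4\sigma-2}t^2$), and conclude that the vanishing locus forces $|z|=0$, hence the characteristic points $(0,\pm 1)$ on $\partial\Omega$. The only cosmetic difference is your cleaner factorization through $|z|^{4\sigma}+t^2$ and the use of the boundary relation to simplify before locating the zeros.
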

\begin{proof}
Let $\varphi (z,t) = {\left| z \right|^{4\sigma }} + {t^2} - 1$,
then
\begin{equation}\begin{aligned}
{X_j}\varphi (z,t) &= \frac{\partial }{{\partial {x_j}}}\left(
{{{\left| z \right|}^{4\sigma }} + {t^2} - 1} \right) + 2\sigma
{y_j}{\left| z \right|^{2\sigma  - 2}}\frac{\partial }{{\partial
t}}\left( {{{\left| z \right|}^{4\sigma }} + {t^2} - 1} \right) \\
&=4\sigma {\left| z \right|^{4\sigma  - 2}}{x_j} + 4\sigma
{y_j}{\left| z \right|^{2\sigma  - 2}}t;
\end{aligned}\notag\end{equation}
\begin{equation}\begin{aligned}
{Y_j}\varphi (z,t) &= \frac{\partial }{{\partial {y_j}}}\left(
{{{\left| z \right|}^{4\sigma }} + {t^2} - 1} \right) - 2\sigma
{x_j}{\left| z \right|^{2\sigma  - 2}}\frac{\partial }{{\partial
t}}\left( {{{\left| z \right|}^{4\sigma }} + {t^2} - 1} \right) \\
&=4\sigma {\left| z \right|^{4\sigma  - 2}}{y_j} - 4\sigma
{x_j}{\left| z \right|^{2\sigma  - 2}}t.
\end{aligned}\notag\end{equation}
Hence
\begin{equation}\begin{aligned}
{\left| {{\nabla _L}\varphi (z,t)} \right|^2} &= \sum\limits_{j =
1}^n {\left( {{{\left| {{X_j}\varphi (z,t} \right|}^2} + {{\left|
{{Y_j}\varphi (z,t)} \right|}^2}} \right)}
\\
&= \sum\limits_{j = 1}^n {\left( {16{\sigma ^2}{{\left| z
\right|}^{8\sigma  - 4}}\left( {x_j^2 + y_j^2} \right) + 16{\sigma
^2}{{\left| z \right|}^{4\sigma  - 4}}{t^2}\left( {x_j^2 + y_j^2}
\right)} \right)} \\
&= 16{\sigma ^2}{\left| z \right|^{8\sigma  - 2}} + 16{\sigma
^2}{\left| z \right|^{4\sigma  - 2}}{t^2}.
\end{aligned}
\notag
\end{equation}
If $\left| {{\nabla _L}\phi (z,t)} \right| = 0$, then $\left| z
\right| = 0$. We see that two points satisfying $z = 0,t =  \pm 1$
are on the boundary $\partial \Omega $ and they are characteristic
points.
\end{proof}
\begin{cor}
The sets ${\Omega _r} = \left\{ {(z,t) \in {C^{2n}} \times R\left|
{{{\left| z \right|}^{4\sigma }} + {t^2} < {r^{4\sigma }}} \right.}
\right\} \left( {r > 0} \right)$ are characteristic domains with
characteristic points $\left( {0, \pm {r^{2\sigma }}} \right)$.
\end{cor}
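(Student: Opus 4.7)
The plan is to follow the proof of Proposition 2.12 essentially verbatim, with the boundary function $\varphi_r(z,t) = |z|^{4\sigma} + t^2 - r^{4\sigma}$ in place of $\varphi$. Since $X_j$ and $Y_j$ are first-order differential operators that annihilate constants, replacing the constant $1$ by $r^{4\sigma}$ leaves $X_j\varphi_r$ and $Y_j\varphi_r$ unchanged from the expressions already derived in Proposition 2.12. Consequently,
\[
|\nabla_L \varphi_r(z,t)|^2 = 16\sigma^2|z|^{8\sigma-2} + 16\sigma^2|z|^{4\sigma-2}t^2,
\]
which is the same formula as in the unit case.

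Next I would set $|\nabla_L \varphi_r| = 0$. Both summands are nonnegative and involve strictly positive powers of $|z|$ (since $\sigma \geq 1$), so the vanishing forces $|z| = 0$, i.e. $z = 0$. Substituting $z = 0$ into the boundary equation $|z|^{4\sigma} + t^2 = r^{4\sigma}$ gives $t^2 = r^{4\sigma}$, hence $t = \pm r^{2\sigma}$. These two points lie on $\partial\Omega_r$ and are characteristic, whereas at every other boundary point one has $|z| > 0$ and thus $|\nabla_L \varphi_r|^2 > 0$. This pins down the characteristic set as exactly $\{(0, \pm r^{2\sigma})\}$.

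As an alternative I could appeal to the anisotropic dilation $\delta_r(z,t) = (r z, r^{2\sigma} t)$, which maps $\Omega_1$ diffeomorphically onto $\Omega_r$ and is homogeneous of the correct weight so that $X_j, Y_j$ transform by an overall nonzero factor; characteristic points then transport from $(0, \pm 1)$ (provided by Proposition 2.12) directly to $(0, \pm r^{2\sigma})$. Either route gives the result with no real obstacle; the only minor point requiring care is the observation that $|z|^{8\sigma-2}$ and $|z|^{4\sigma-2}$ have strictly positive exponents, valid because $\sigma$ is a natural number, so that the vanishing of $|\nabla_L \varphi_r|^2$ genuinely forces $z = 0$ rather than allowing some degenerate cancellation.
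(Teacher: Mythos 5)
Your proposal is correct and follows exactly the route the paper intends: the corollary is stated without separate proof because the computation of Proposition 2.12 carries over verbatim once the constant $1$ is replaced by $r^{4\sigma}$, which does not affect $\left| \nabla_L \varphi_r \right|^2 = 16\sigma^2 |z|^{8\sigma-2} + 16\sigma^2 |z|^{4\sigma-2} t^2$, and the vanishing of this quantity forces $z=0$, hence $t = \pm r^{2\sigma}$ on the boundary. Your remark about the positivity of the exponents and the alternative dilation argument are both sound but add nothing essential beyond the paper's computation.
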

\section{The proofs of Theorem 1.1 and Corollaries 1.2-1.4}
\noindent\textit{Proof of Theorem 1.1.}
We apply (2.3) with $A = L =
- {\Delta _L},$ ${B_1} = {x_1}, \cdot \cdot  \cdot ,{B_n} =
{x_n},{B_{n + 1}} = {y_1}, \cdot  \cdot  \cdot ,{B_{2n}} = {y_n},$
${T_1} = {X_1}, \cdot  \cdot  \cdot ,{T_n} = {X_n},{T_{n + 1}} =
{Y_1}, \cdot  \cdot  \cdot ,{T_{2n}} = {Y_n}$, $f(x) = {(\lambda  -
x)^\alpha }$, $g(x) = {(\lambda  - x)^\beta }$, and obtain
\begin{align}&{\left( {\sum\limits_{i = 1}^k
{\sum\limits_{p = 1}^n {{{({\lambda _{k + 1}} - {\lambda
_i})}^\alpha }\left( {{{\left\langle {\left[ {{X_p},{x_p}}
\right]{u_i},{u_i}} \right\rangle }_{{L^2}}} + {{\left\langle
{\left[ {{Y_p},{y_p}} \right]{u_i},{u_i}} \right\rangle }_{{L^2}}}}
\right)} } } \right)^2}
\\
 \le &4\left( {\sum\limits_{i = 1}^k
{\sum\limits_{p = 1}^n {{{({\lambda _{k + 1}} - {\lambda _i})}^\beta
}\left( {{{\left\langle {\left[ {L,{x_p}} \right]{u_i},{x_p}{u_i}}
\right\rangle }_{{L^2}}} + {{\left\langle {\left[ {L,{y_p}}
\right]{u_i},{y_p}{u_i}} \right\rangle }_{{L^2}}}} \right)} } }
\right) \notag
\\
&\times \left( {\sum\limits_{i = 1}^k {\sum\limits_{p = 1}^n
{{{({\lambda _{k + 1}} - {\lambda _i})}^{2\alpha  - \beta  -
1}}\left( {\left\| {{X_p}{u_i}} \right\|_{{L^2}}^2 + \left\|
{{Y_p}{u_i}} \right\|_{{L^2}}^2} \right)} } } \right).\notag
\end{align}
Since
\[\left[ {{X_p},{x_p}} \right]{u_i} = \left[ {{Y_p},{y_p}} \right]{u_i} = {u_i},\]
and
\[{\left\langle {\left[ {L,{x_p}} \right]{u_i},{x_p}{u_i}} \right\rangle _{{L^2}}} = 2\int_\Omega  {u_i^2}  - {\left\langle {\left[ {L,{x_p}} \right]{u_i},{x_p}{u_i}} \right\rangle _{{L^2}}}\]
from (2.2), it follows
\begin{equation}{\left\langle {\left[
{L,{x_p}} \right]{u_i},{x_p}{u_i}} \right\rangle _{{L^2}}} =
\int_\Omega {u_i^2}  = 1.
\end{equation}
In a similar way, we obtain
\begin{equation}
{\left\langle {\left[ {L,{y_p}} \right]{u_i},{y_p}{u_i}} \right\rangle _{{L^2}}} = \int_\Omega  {u_i^2}  =
1.
\end{equation}
On the other hand, it yields
\begin{equation}
\sum\limits_{p = 1}^n {\left\| {{X_p}{u_i}} \right\|_{{L^2}}^2}  + \sum\limits_{p = 1}^n {\left\| {{Y_p}{u_i}} \right\|_{{L^2}}^2}  = \int_\Omega  {{\nabla _L}{u_i}{\nabla _L}{u_i}}  = \int_\Omega  {L{u_i}{u_i}}  = \int_\Omega  {{\lambda _i}{u_i}{u_i}}  = {\lambda _i}.
\end{equation}
Instituting (3.2), (3.3)and (3.4) into (3.1), it deduces (1.4).
\hfill $\Box$

\noindent\textit{Proof of Corollary 1.2.} To obtain (1.5), we only
need to take $\alpha  = \beta  = 2$ in (1.4).
\hfill $\Box$

\noindent\textit{Proof of Corollary 1.3.} When $\alpha  = \beta $,
we have from Theorem 1.1 that
\[\sum\limits_{i = 1}^k {({\lambda _{k + 1}} - {\lambda _i}} {)^\alpha } \le \frac{2}{n}\sum\limits_{i = 1}^k {({\lambda _{k + 1}} - {\lambda _i}} {)^{\alpha  - 1}}{\lambda _i}.\]
Using Lemmas 2.6 and 2.7, it implies
\[\sum\limits_{i = 1}^k {({\lambda _{k + 1}} - {\lambda _i}} {)^\alpha } \ge \frac{1}{{{k^{\alpha  - 1}}}}{\left( {\sum\limits_{i = 1}^k {({\lambda _{k + 1}} - {\lambda _i}} )} \right)^\alpha } \ge {\left( {\sum\limits_{i = 1}^k {({\lambda _{k + 1}} - {\lambda _i}} )} \right)^{\alpha  - 1}}\left( {{\lambda _{k + 1}} - {\lambda _k}} \right)\]
and\[\frac{2}{n}\sum\limits_{i = 1}^k {({\lambda _{k + 1}} -
{\lambda _i}} {)^{\alpha  - 1}}{\lambda _i} \le \frac{2}{{nk}}\left(
{\sum\limits_{i = 1}^k {({\lambda _{k + 1}} - {\lambda _i}}
{)^{\alpha  - 1}}} \right)\left( {\sum\limits_{i = 1}^k {{\lambda
_i}} } \right),\] hence
\[{\left( {\sum\limits_{i = 1}^k {({\lambda _{k + 1}} - {\lambda _i}} )} \right)^{\alpha  - 1}}\left( {{\lambda _{k + 1}} - {\lambda _k}} \right) \le \frac{2}{{nk}}\left( {\sum\limits_{i = 1}^k {{{({\lambda _{k + 1}} - {\lambda _i})}^{\alpha  - 1}}} } \right)\left( {\sum\limits_{i = 1}^k {{\lambda _i}} } \right).\]
Since\[{\left( {\sum\limits_{i = 1}^k {({\lambda _{k + 1}} -
{\lambda _i}} )} \right)^{\alpha  - 1}} \ge \sum\limits_{i = 1}^k
{{{({\lambda _{k + 1}} - {\lambda _i})}^{\alpha  - 1}}}, \] it shows
(1.6).
\hfill $\Box$

\noindent\textit{Proof of Corollary 1.4.} When $1 \le \alpha  =
\beta  \le 2$, we have from Theorem 1.1 that
\[\sum\limits_{i = 1}^k {({\lambda _{k + 1}} - {\lambda _i}} {)^\alpha } \le \frac{2}{n}\sum\limits_{i = 1}^k {({\lambda _{k + 1}} - {\lambda _i}} {)^{\alpha  - 1}}{\lambda _i},\]
then
\begin{equation}\begin{aligned}
&{\lambda _{k +
1}}\sum\limits_{i = 1}^k {{{({\lambda _{k + 1}} - {\lambda
_i})}^{\alpha  - 1}}}  - \sum\limits_{i = 1}^k {{{({\lambda _{k +
1}} - {\lambda
_i})}^{\alpha  - 1}}{\lambda _i}}  \\
&= \sum\limits_{i = 1}^k
{{{({\lambda _{k + 1}} - {\lambda _i})}^{\alpha  - 1}}({\lambda _{k
+ 1}} - {\lambda _i})} \\
&\le \frac{2}{n}\sum\limits_{i = 1}^k {({\lambda _{k + 1}} -
{\lambda _i}} {)^{\alpha  - 1}}{\lambda _i},
\end{aligned}\notag\end{equation}
or
\begin{equation}\begin{aligned}
&{\lambda _{k + 1}}\sum\limits_{i = 1}^k {{{({\lambda _{k + 1}} -
{\lambda _i})}^{\alpha  - 1}}}  \\
&\le \left( {1 + \frac{2}{n}} \right)\sum\limits_{i = 1}^k
{({\lambda _{k + 1}} - {\lambda _i}} {)^{\alpha  - 1}}{\lambda _i}
\\
&\le \left( {1 + \frac{2}{n}} \right)\frac{1}{k}\left(
{\sum\limits_{i = 1}^k {({\lambda _{k + 1}} - {\lambda _i}}
{)^{\alpha  - 1}}} \right)\left( {\sum\limits_{i = 1}^k {{\lambda
_i}} } \right),
\end{aligned}\notag\end{equation}
where Lemma 2.7 is used. Therefore
\[\left( {{\lambda _{k + 1}} - \left( {1 + \frac{2}{n}} \right)\frac{1}{k}\left( {\sum\limits_{i = 1}^k {{\lambda _i}} } \right)} \right)\left( {\sum\limits_{i = 1}^k {{{({\lambda _{k + 1}} - {\lambda _i})}^{\alpha  - 1}}} } \right) \le 0.\]
Since $\left( {\sum\limits_{i = 1}^k {{{({\lambda _{k + 1}} -
{\lambda _i})}^{\alpha  - 1}}} } \right) \ge 0$, it follows
\[{\lambda _{k + 1}} - \left( {1 + \frac{2}{n}} \right)\frac{1}{k}\left( {\sum\limits_{i = 1}^k {{\lambda _i}} } \right) \le 0\]
and (1.7) is proved.
\hfill $\Box$

\section{Proofs of Theorem 1.5 and Corollaries 1.6-1.9}

\noindent\textit{Proof of Theorem 1.5.}
Applying (2.3) with $A =
{L^2} = {\left( { - {\Delta _L}} \right)^2},$ ${B_1} = {x_1}, \cdot
\cdot \cdot ,{B_n} = {x_n},{B_{n + 1}} = {y_1}, \cdot  \cdot  \cdot
,{B_{2n}} = {y_n},$ ${T_1} = {X_1}, \cdot  \cdot  \cdot ,{T_n} =
{X_n},{T_{n + 1}} = {Y_1}, \cdot \cdot  \cdot ,{T_{2n}} = {Y_n}$,
$f(x) = {(\lambda  - x)^\alpha }$, $g(x) = {(\lambda  - x)^\beta }$,
it follows
\begin{align}
&{\left( {\sum\limits_{i = 1}^k {\sum\limits_{p = 1}^n {{{({\lambda
_{k + 1}} - {\lambda _i})}^\alpha }\left( {{{\left\langle {\left[
{{X_p},{x_p}} \right]{u_i},{u_i}} \right\rangle }_{{L^2}}} +
{{\left\langle {\left[ {{Y_p},{y_p}} \right]{u_i},{u_i}}
\right\rangle }_{{L^2}}}} \right)} } } \right)^2}
\\
\le &4\left( {\sum\limits_{i = 1}^k {\sum\limits_{p = 1}^n
{{{({\lambda _{k + 1}} - {\lambda _i})}^\beta }\left(
{{{\left\langle {\left[ {{L^2},{x_p}} \right]{u_i},{x_p}{u_i}}
\right\rangle }_{{L^2}}} + {{\left\langle {\left[ {{L^2},{y_p}}
\right]{u_i},{y_p}{u_i}} \right\rangle }_{{L^2}}}} \right)} } }
\right) \notag
\\
&\times \left( {\sum\limits_{i = 1}^k {\sum\limits_{p = 1}^n
{{{({\lambda _{k + 1}} - {\lambda _i})}^{2\alpha  - \beta  -
1}}\left( {\left\| {{X_p}{u_i}} \right\|_{{L^2}}^2 + \left\|
{{Y_p}{u_i}} \right\|_{{L^2}}^2} \right)} } } \right).\notag
\end{align}
Since
\begin{equation}\begin{aligned}
&\sum\limits_{p = 1}^n {\left\| {{X_p}{u_i}} \right\|_{{L^2}}^2}  +
\sum\limits_{p = 1}^n {\left\| {{Y_p}{u_i}} \right\|_{{L^2}}^2}\\
&= \int_\Omega  {{\nabla _L}{u_i}{\nabla _L}{u_i}}= \int_\Omega
{L{u_i} \cdot {u_i}}\\
&\le {\left( {\int_\Omega  {u_i^2} } \right)^{\frac{1}{2}}}{\left(
{\int_\Omega  {{{\left( {L{u_i}} \right)}^2}} }
\right)^{\frac{1}{2}}} = \lambda _i^{\frac{1}{2}},
\end{aligned}\notag\end{equation}
it implies

\begin{align}
&\left( {\sum\limits_{i = 1}^k {\sum\limits_{p = 1}^n {{{({\lambda
_{k + 1}} - {\lambda _i})}^{2\alpha  - \beta  - 1}}\left( {\left\|
{{X_p}{u_i}} \right\|_{{L^2}}^2 + \left\| {{Y_p}{u_i}}
\right\|_{{L^2}}^2} \right)} } } \right)
\\
&= \left( {\sum\limits_{i = 1}^k {{{({\lambda _{k + 1}} - {\lambda
_i})}^{2\alpha  - \beta  - 1}}\lambda _i^{\frac{1}{2}}} }
\right).\notag
\end{align}

Recalling (3.2) and (3.3), we have

\begin{align}
&{\left( {\sum\limits_{i = 1}^k {\sum\limits_{p = 1}^n {{{({\lambda
_{k + 1}} - {\lambda _i})}^\alpha } \left( {{{\left\langle {\left[
{{X_p},{x_p}} \right]{u_i},{u_i}} \right\rangle }_{{L^2}}} +
{{\left\langle {\left[ {{Y_p},{y_p}} \right]{u_i},{u_i}}
\right\rangle }_{{L^2}}}} \right)} } } \right)^2}
\\
&= 4{n^2}{\left( {\sum\limits_{i = 1}^k {{{({\lambda _{k + 1}} -
{\lambda _i})}^\alpha }} } \right)^2}.\notag
\end{align}
On the other hand, it obtains by (2.2) that
\begin{equation}
\begin{aligned}
\left[ {{L^2},{x_p}} \right]{u_i} &= {L^2}\left( {{x_p}{u_i}}
\right) - {x_p}{L^2}{u_i} \\
&=  - 2{X_p}L{u_i} - 2L\left( {{X_p}{u_i}} \right),
\end{aligned}\notag
\end{equation}
and
\[ \left[ {{L^2},{y_p}} \right]{u_i}=  - 2{Y_p}L{u_i} - 2L\left( {{Y_p}{u_i}} \right).\]
Hence, we have
\begin{equation}\begin{aligned}
{\left\langle {\left[ {{L^2},{x_p}} \right]{u_i},{x_p}{u_i}}
\right\rangle _{{L^2}}} &= 2\int_\Omega  {L{u_i} \cdot {X_p}\left(
{{x_p}{u_i}} \right)}  - 2\int_\Omega  {{x_p}{X_p}{u_i} \cdot
L{u_i}}  - 4\int_\Omega  {X_p^2{u_i} \cdot
{u_i}}\\
&= 2\int_\Omega  {L{u_i} \cdot {u_i}}  - 4\int_\Omega {X_p^2{u_i}
\cdot {u_i}}
\end{aligned}\notag\end{equation}
and
\begin{equation}\begin{aligned}
{\left\langle {\left[ {{L^2},{y_p}} \right]{u_i},{y_p}{u_i}}
\right\rangle _{{L^2}}} &= 2\int_\Omega  {L{u_i} \cdot {Y_p}\left(
{{y_p}{u_i}} \right)}  - 2\int_\Omega  {{y_p}{Y_p}{u_i} \cdot
L{u_i}}  - 4\int_\Omega  {Y_p^2{u_i} \cdot
{u_i}}\\
&= 2\int_\Omega  {L{u_i} \cdot {u_i}}  - 4\int_\Omega {Y_p^2{u_i}
\cdot {u_i}}.
\end{aligned}\notag\end{equation}
Noting
\[ - \sum\limits_{p = 1}^n {\int_\Omega  {X_p^2{u_i} \cdot {u_i}} }  - \sum\limits_{p = 1}^n {\int_\Omega  {Y_p^2{u_i} \cdot {u_i}} }  = \sum\limits_{p = 1}^n {\left\| {{X_p}{u_i}} \right\|_{{L^2}}^2}  + \sum\limits_{p = 1}^n {\left\| {{Y_p}{u_i}} \right\|_{{L^2}}^2}  = \int_\Omega  {L{u_i} \cdot {u_i}} ,\]
so
\begin{align} &\sum\limits_{i = 1}^k {\sum\limits_{p = 1}^n
{{{({\lambda _{k + 1}} - {\lambda _i})}^\beta }\left(
{{{\left\langle {\left[ {{L^2},{x_p}} \right]{u_i},{x_p}{u_i}}
\right\rangle }_{{L^2}}} + {{\left\langle {\left[ {{L^2},{y_p}}
\right]{u_i},{y_p}{u_i}} \right\rangle }_{{L^2}}}} \right)} }
\\
=& \sum\limits_{i = 1}^k {\sum\limits_{p = 1}^n {{{({\lambda _{k +
1}} - {\lambda _i})}^\beta }\left( {2\int_\Omega  {L{u_i} \cdot
{u_i}}  - 4\int_\Omega  {X_p^2{u_i} \cdot {u_i}} } \right)} }\notag
\\
&+ \sum\limits_{i = 1}^k {\sum\limits_{p = 1}^n {{{({\lambda _{k +
1}} - {\lambda _i})}^\beta }\left( {2\int_\Omega  {L{u_i} \cdot
{u_i}}  - 4\int_\Omega  {Y_p^2{u_i} \cdot {u_i}} } \right)} }\notag
\\
=& 4\left( {n + 1} \right)\sum\limits_{i = 1}^k {{{({\lambda _{k +
1}} - {\lambda _i})}^\beta }\int_\Omega  {L{u_i} \cdot {u_i}}
}\notag
\\
\le& 4\left( {n + 1} \right)\sum\limits_{i = 1}^k {{{({\lambda _{k +
1}} - {\lambda _i})}^\beta }\lambda _i^{\frac{1}{2}}} .\notag
\end{align}
Taking (4.2), (4.3) and (4.4) into (4.1), we obtain (1.8). \hfill
$\Box$

\noindent\textit{Proof of Corollary 1.6.} To obtain (1.9), take
$\alpha = \beta  = 2$ in (1.8).
\hfill $\Box$

\noindent\textit{Proof of Corollary 1.7.}
From Theorem 1.5, we have
\[{\left( {\sum\limits_{i = 1}^k {{{({\lambda _{k + 1}} - {\lambda _i})}^\alpha }} } \right)^2} \le \frac{{4\left( {n + 1} \right)}}{{{n^2}}}\left( {\sum\limits_{i = 1}^k {{{({\lambda _{k + 1}} - {\lambda _i})}^\beta }\lambda _i^{\frac{1}{2}}} } \right)\left( {\sum\limits_{i = 1}^k {{{({\lambda _{k + 1}} - {\lambda _i})}^{2\alpha  - \beta  - 1}}\lambda _i^{\frac{1}{2}}} } \right).\]
Applying Lemma 2.8 with ${A_i} = {\lambda _{k + 1}} - {\lambda _i}$
and ${B_i} = {C_i} = \lambda _i^{\frac{1}{2}}$, it deduces (1.10).
\hfill $\Box$

\noindent\textit{Proof of Corollary 1.8.} To obtain (1.11), we only
need to take $\alpha  = \beta  = 2$ in Corollary 1.7.
\hfill $\Box$

\noindent\textit{Proof of Corollary 1.9.}
We have from (1.8) that
\[{\left( {\sum\limits_{i = 1}^k {{{({\lambda _{k + 1}} - {\lambda _i})}^\alpha }} } \right)^2} \le \frac{{4\left( {n + 1} \right)}}{{{n^2}}}\left( {\sum\limits_{i = 1}^k {{{({\lambda _{k + 1}} - {\lambda _i})}^\beta }\lambda _i^{\frac{1}{2}}} } \right) \times \left( {\sum\limits_{i = 1}^k {{{({\lambda _{k + 1}} - {\lambda _i})}^{2\alpha  - \beta  - 1}}\lambda _i^{\frac{1}{2}}} } \right).\]
Applying Lemma 2.7 to $\left( {\sum\limits_{i = 1}^k {{{({\lambda
_{k + 1}} - {\lambda _i})}^\beta }\lambda _i^{\frac{1}{2}}} }
\right)$ and $\left( {\sum\limits_{i = 1}^k {{{({\lambda _{k + 1}} -
{\lambda _i})}^{2\alpha  - \beta  - 1}}\lambda _i^{\frac{1}{2}}} }
\right)$, it follows
\begin{equation}\begin{aligned}
&{\left( {\sum\limits_{i = 1}^k {{{({\lambda _{k + 1}} - {\lambda
_i})}^\alpha }} } \right)^2}\\
&\le \frac{{4\left( {n + 1} \right)}}{{{n^2}{k^2}}}\left(
{\sum\limits_{i = 1}^k {{{({\lambda _{k + 1}} - {\lambda _i})}^\beta
}} } \right)\left( {\sum\limits_{i = 1}^k {{{({\lambda _{k + 1}} -
{\lambda _i})}^{2\alpha  - \beta  - 1}}} } \right){\left(
{\sum\limits_{i = 1}^k {\lambda _i^{\frac{1}{2}}} } \right)^2}\\
&= \frac{{4\left( {n + 1} \right)}}{{{n^2}{k^2}}}\left(
{\sum\limits_{i = 1}^k {{{({\lambda _{k + 1}} - {\lambda
_i})}^\alpha }} } \right) \times \left( {\sum\limits_{i = 1}^k
{{{({\lambda _{k + 1}} - {\lambda _i})}^{\alpha  - 1}}} }
\right){\left( {\sum\limits_{i = 1}^k {\lambda _i^{\frac{1}{2}}} }
\right)^2},
\end{aligned}\notag\end{equation}
where we have used $1 \le \alpha  = \beta  \le 2$. It implies
\[\sum\limits_{i = 1}^k {{{({\lambda _{k + 1}} - {\lambda _i})}^\alpha }}  \le \frac{{4\left( {n + 1} \right)}}{{{n^2}{k^2}}}\left( {\sum\limits_{i = 1}^k {{{({\lambda _{k + 1}} - {\lambda _i})}^{\alpha  - 1}}} } \right){\left( {\sum\limits_{i = 1}^k {\lambda _i^{\frac{1}{2}}} } \right)^2},\]
then
\[\sum\limits_{i = 1}^k {{{({\lambda _{k + 1}} - {\lambda _i})}^{\alpha  - 1}}\left( {\left( {{\lambda _{k + 1}} - {\lambda _k}} \right) - \frac{{4\left( {n + 1} \right)}}{{{n^2}{k^2}}}{{\left( {\sum\limits_{i = 1}^k {\lambda _i^{\frac{1}{2}}} } \right)}^2}} \right)}  \le 0,\]
since ${\lambda _i} \le {\lambda _k}$ for all $i \le k$. Hence
\[\left( {{\lambda _{k + 1}} - {\lambda _k}} \right) - \frac{{4\left( {n + 1} \right)}}{{{n^2}{k^2}}}{\left( {\sum\limits_{i = 1}^k {\lambda _i^{\frac{1}{2}}} } \right)^2} \le 0,\]
and (1.12) is proved.
\hfill $\Box$

\end{document}